\newtheorem{theorem}{Theorem}
\newtheorem{lemma}{Lemma}
\newtheorem{corollary}{Corollary}
\newtheorem{proposition}{Proposition}
\newtheorem{definition}{Definition}
\newtheorem{question}{Question}
\newtheorem{remark}{Remark}
\newcommand{\A}{\mathcal{A}}
\newcommand{\B}{\mathcal{B}}
\newcommand{\D}{\mathcal{D}}
\newcommand{\K}{\mathfrak{K}}
\newcommand{\E}{\mathcal{E}}
\newcommand{\Pw}{\mathcal{P}(\omega)}
\newcommand{\InfEx}{\mathbf{InfEx}}
\newcommand{\TxtEx}{\mathbf{TxtEx}}
\newcommand{\Inf}{\mathbb{I}}
\newcommand{\Dp}{\mathcal{D}_+}
\newcommand{\embedCantor}{\leq_{\textnormal{Cantor}}}
\newcommand{\embedScott}{\leq_{\textnormal{Scott}}}
\newcommand\vvee{\hstretch{.8}{\vee\mkern-8mu\vee}}       
\newcommand\wwedge{\hstretch{.8}{\wedge\mkern-8mu\wedge}} 
\newcommand{\df}{\stackrel{\text{def}}{=}}
\newcommand{\pair}[1]{\langle{#1}\rangle}
\newcommand{\ov}[1]{\overline{#1}}
\author[1]{Nikolay Bazhenov}
\author[2]{Ekaterina Fokina}
\author[2]{Dino Rossegger}
\author[3]{Alexandra Soskova}
\author[3]{Stefan Vatev}
\affil[1]{Sobolev Institute of Mathematics, Novosibirsk}
\affil[ ]{bazhenov@math.nsc.ru}
\affil[2]{Institute of Discrete Mathematics and Geometry, Technische Universität Wien}
\affil[ ]{\{dino.rossegger,ekaterina.fokina\}@tuwien.ac.at}
\affil[3]{Faculty of Mathematics and Informatics, Sofia University}
\affil[ ]{\{asoskova,stefanv\}@fmi.uni-sofia.bg}
\title{Learning Families of Algebraic Structures from Text
}
\date{}
\begin{document}

\maketitle

\begin{abstract}
  We adapt the classical notion of learning from text to computable structure theory.
  Our main result is a model-theoretic characterization of the learnability 
  from text for classes of structures.
  We show that a family of structures is learnable from text if and only if the structures can be distinguished in terms of their theories restricted to positive infinitary $\Sigma_2$ sentences.
\end{abstract}

\section{Introduction}

The classical algorithmic learning theory goes back to the works of Putnam~\cite{putnam-65} and Gold~\cite{gold-67}. A learner $M$ receives step by step more and more data (finite amount at each step) on an object $X$ to be learned, and $M$ outputs a sequence of hypotheses that converges to a finitary description of $X$. The classical studies (up to the beginning of 2000s) mainly focused on learning for formal languages and for recursive functions, see the monograph~\cite{jain-learning-book}.

Within the framework of computable structure theory, the work of Stephan and Ventsov~\cite{Stephan-Ventsov} initiated investigations of learnability for classes of substructures of a given computable structure $\mathcal{S}$. This approach was further developed, e.g., in the papers~\cite{Harizanov-Stephan,Gao-Stephan}. 

Fokina, K{\"o}tzing, and San Mauro~\cite{pmlr-v98-fokina19a} considered various
classes $\mathfrak{K}$ of computable equivalence relations. For these
$\mathfrak{K}$, they introduced the notions of \emph{learnability from
informant} (or $\InfEx$-learnability) and \emph{learnability from text}
($\TxtEx$-learnability). The work~\cite{bazhenov_learning_2020} extended the
notion of $\InfEx$-learnability to arbitrary countable families of computable
structures 
and obtained the following general model-theoretic characterization of $\InfEx$-learnability. Let $\mathfrak{K} = \{ \mathcal{A}_i : i \in\omega \}$ be a family of computable structures such that $\mathcal{A}_i \not\cong \mathcal{A}_j$ for $i\neq j$. Then $\mathfrak{K}$ is learnable from informant if and only if there exists a family of infinitary $\Sigma_2$ sentences $\{ \psi_i : i\in\omega\}$ such that
\begin{itemize} 
	\item[($\dagger$)] for each $i$, $\mathcal{A}_i$ is the only member of $\mathfrak{K}$ satisfying $\psi_i$. 
\end{itemize}
In turn, the results of~\cite{bazhenov_learning_2020} led to discovering some unexpected connections between $\InfEx$-learnability and results from descriptive set theory, see~\cite{BCSM-21}.

\medskip

Until now, to our best knowledge, there was no notion of $\TxtEx$-learnability applicable for general classes of countable structures. This paper aims to close this gap. In Section~\ref{sect:formal-framework}, we introduce our new formal framework for classes of structures $\mathfrak{K}$: this approach allows us to simultaneously give both the known definition of $\InfEx$-learnability and the new definition of $\TxtEx$-learnability.

The main result of the paper (Theorem~\ref{th:txtex-learning-equivalence}) shows that $\TxtEx$-learnability admits a model-theoretic characterization similar to the characterization of $\InfEx$-learnability discussed above: a family $\{ \mathcal{A}_i : i \in\omega \}$ is $\TxtEx$-learnable if and only if there exists a family of $\Sigma^p_2$ sentences $\{\psi_i : i\in\omega\}$ satisfying ($\dagger$). Here $\Sigma^p_n$ formulas are \emph{positive infinitary $\Sigma_n$ formulas} introduced in our previous work~\cite{bazhenov_lopez-escobar_2023}, see the formal details in Section~\ref{sect:Scott-Continuous}.



\section{The Formal Framework} \label{sect:formal-framework}

Let us consider structures $\A$ with domains a subset of $\omega$.
We consider computable signatures with $=$ and $\neq$.
We shall denote by $\D(\A)$ the basic diagram of $\A$, i.e., $\D(\A)$ contains exactly the positive and negative atomic sentences true in $\A$,
and by $\Dp(\A)$ the positive atomic diagram of $\A$, i.e., $\Dp(\A)$ contains only the positive atomic sentences true in $\A$.

For a signature $L$, by $Mod(L)$ we denote the set of all $L$-structures $\mathcal{A}$ with $\mathrm{dom}(\mathcal{A}) \subseteq \omega$. If not specified otherwise, we assume that every considered class $\mathfrak{K} \subseteq Mod(L)$ is closed under isomorphisms.

\medskip

First we need to introduce the components of our learning framework. 
Let $\mathfrak{K} \subseteq Mod(L)$ be a family which contains precisely $\kappa$ isomorphism types, where $\kappa \leq \omega$, those are the types of $L$-structures $\mathcal{A}_i$, $i\in\kappa$.
\begin{itemize}
\item
  The \emph{learning domain} (LD) is the collection of all copies $\mathcal{S}$ of the structures from $\K$ such that $\mathrm{dom}(\mathcal{S}) \subseteq \omega$, i.e., 
  \[\mathrm{LD}(\mathfrak{K}) = \bigcup_{i\in \kappa} \{ \mathcal{S} \in Mod(L) : \mathcal{S} \cong \mathcal{A}_i\}.\]

\item
  The \emph{hypothesis space} (HS) contains the indices $i$ for $\mathcal{A}_i \in \mathfrak{K}$ (an index is viewed as a conjecture about the isomorphism type of an input structure $\mathcal{S}$) and a question mark symbol:
  \[\mathrm{HS}(\mathfrak{K}) = \kappa \cup \{ ?\}.\]
\item
  A \emph{learner} $M$ sees, stage by stage, some atomic facts about a given structure from $LD(\mathfrak{K})$.
  The learner $M$ is required to output conjectures from $HS(\mathfrak{K})$. This is formalized as follows. 
  \smallskip

  Let $Atm$ denote the set of (the G{\"o}del numbers) of all positive and negative atomic sentences in the signature $L\cup \omega$ (in other words, positive and negative atomic facts about possible $L$-structures on the domain $\omega$). The restriction of $Atm$ to only positive atomic sentences is denoted by $Atm_{+}$.	
  A \emph{learner} $M$ is a function from the set $(Atm)^{<\omega}$ (i.e., the set of all finite tuples of atomic facts) into $\mathrm{HS}(\mathfrak{K})$.
  
  \smallskip
\item
  For an $L$-structure $\mathcal{S}$, an \emph{informant} $\mathbb{I}$ for $\mathcal{S}$ is an arbitrary sequence $(\psi_0,\psi_1,\psi_2,\dots)$ containing elements from $Atm$ and satisfying 
  \[\D(\mathcal{S}) = \{ \psi_i : i\in\omega\}.\]
\item
  For an $L$-structure $\mathcal{S}$, a \emph{text} $\mathbb{T}$ for $\mathcal{S}$ is an arbitrary sequence $(\psi_0,\psi_1,\psi_2,\dots)$ containing elements from $Atm_{+}$ and satisfying 
  \[\Dp(\mathcal{S}) = \{ \psi_i : i\in\omega\}.\]
\item
  For $k\in\omega$, by $\mathbb{I}\upharpoonright k$ (respectively, $\mathbb{T}\upharpoonright k$) we denote the corresponding sequence $(\psi_i)_{i<k}$.
\end{itemize}

\begin{definition}[\cite{bazhenov_turing_2021}]
  We say that the family $\K$ is \emph{$\InfEx$-learnable} if there exists a learner $M$ such that for any structure $\mathcal{S}\in \mathrm{LD}(\K)$ and any informant $\Inf_{\mathcal S}$ for $\mathcal{S}$, the learner eventually stabilizes to a correct conjecture about the isomorphism type of $\mathcal{S}$. More formally, there exists a limit
  \[\lim_{n\to \omega} M(\Inf_{\mathcal{S}}\upharpoonright {n}) = i\]
  belonging to $\omega$, and $\mathcal{A}_i$ is isomorphic to $\mathcal{S}$.
\end{definition}

Recall that a structure $\mathcal{A} = (A; \sim)$ is an \emph{equivalence structure} if $\sim$ is an equivalence relation on $A$. 
The paper~\cite{pmlr-v98-fokina19a} introduced the definition of $\TxtEx$-learnability for equivalence structures. Here we generalize this definition to arbitrary structures.

\begin{definition}
  We say that the family $\mathfrak{K}$ is \emph{$\TxtEx$-learnable} if there exists a learner $M$ such that for any structure $\mathcal{S}\in \mathrm{LD}(\mathfrak{K})$ and any text $\mathbb{T}_{\mathcal S}$ for $\mathcal{S}$, the learner eventually stabilizes to a correct conjecture about the isomorphism type of $\mathcal{S}$. More formally, there exists a limit
  \[\lim_{n\to \omega} M(\mathbb{T}_{\mathcal{S}}\upharpoonright {n}) = i\]
  belonging to $\omega$, and $\mathcal{A}_i$ is isomorphic to $\mathcal{S}$.
\end{definition}

In this paper, we give many examples of classes of equivalence structures. 
We use the notation $[\alpha_1:\beta_1,\dots,\alpha_n:\beta_n]$, where $\alpha_i,\beta_i \leq \omega$,
to denote the equivalence structure with precisely $\beta_i$-many equivalence classes of size $\alpha_i$, for all $i=1,\dots,n$  (and with no equivalence classes of other sizes).

\begin{remark}\label{remark:the-key-classes}
The classes of equivalence structures
$\E = \{[\omega:1, n:1]\ \mid\ n\geq 1\}$ and $\tilde\E = \{[\omega:\omega, n:\omega] \ \mid\ n\geq 1\}$
play an important role in this paper.
\end{remark}

\begin{remark}
	 It is easy to observe that every $\TxtEx$-learnable class is also
   $\InfEx$-learnable (indeed, notice that an informant $\mathbb{I}$ for a
   structure $\A$ can be effectively transformed into a text $\mathbb{T}_I$ for
   this $\A$). Theorem 1.4 in~\cite{pmlr-v98-fokina19a} proves that the class $\mathfrak{K} = \{[\omega:1], [\omega:2]\}$ is $\InfEx$-learnable, but not $\TxtEx$-learnable.
\end{remark}



\section{Cantor-Continuous Embeddings}


  

The Cantor space, denoted by $2^\omega$, can be represented as the collection of reals, equipped with the product topology of the discrete topology on the set $\{0,1\}$.
A basis for $2^\omega$ is formed by the collection of $[\sigma] = \{f \in 2^\omega : \sigma \subset f\}$, for all finite binary strings $\sigma$.
Here we will need the following characterization of the Cantor-continuous functions.

\begin{proposition}[Folklore]\label{prop:cantor:turing:relative}
  A function $\Psi: 2^\omega \to 2^\omega$ is Cantor-continuous if and only if
  there exists a Turing operator $\Phi_e$ and a set $A \in 2^\omega$ such that $\Psi(X) = \Phi_e(A\oplus X)$ for all $X \in 2^\omega$.
\end{proposition}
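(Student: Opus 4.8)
The plan is to prove the two directions separately, with the backward direction being routine and the forward direction carrying the real content.

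For the backward direction, suppose $\Psi(X) = \Phi_e(A \oplus X)$ for a fixed oracle $A$ and Turing operator $\Phi_e$. I would fix $X$ and an output position $n$ and invoke the use principle: since $\Phi_e(A \oplus X)(n)$ converges, its computation queries the oracle $A \oplus X$ only below some finite bound. Hence there is a length $m$ such that $\Phi_e(A \oplus Y)(n) = \Phi_e(A \oplus X)(n)$ for every $Y$ with $Y \upharpoonright m = X \upharpoonright m$, since the fixed set $A$ contributes its bits for free. This shows that each output bit is locally constant in $X$, so the preimage under $\Psi$ of every subbasic clopen set $\{Z : Z(n) = b\}$ is open, which is exactly continuity of $\Psi$ in the Cantor topology.

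For the forward direction, the idea is to represent a continuous $\Psi$ by a monotone map on finite strings and to store that map in the oracle $A$. Concretely, for $\sigma \in 2^{<\omega}$ I would let $\psi(\sigma)$ be the longest string $\tau$ of length at most $|\sigma|$ such that $\tau \subseteq \Psi(X)$ for every $X \in [\sigma]$; the length cap guarantees $\psi(\sigma)$ is always a finite string, even when $\Psi$ is constant on $[\sigma]$. Monotonicity, namely $\sigma \subseteq \sigma' \Rightarrow \psi(\sigma) \subseteq \psi(\sigma')$, follows because shrinking the clopen set $[\sigma'] \subseteq [\sigma]$ can only lengthen the common initial segment of the image while the cap $|\sigma'| \geq |\sigma|$ only grows, so both truncated strings are prefixes of the same longer string and are therefore comparable. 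Continuity of $\Psi$ then yields the key convergence property $\bigcup_n \psi(X \upharpoonright n) = \Psi(X)$: for each bit position $k$, continuity produces an $n > k$ for which all $X' \in [X \upharpoonright n]$ agree with $\Psi(X)$ up to bit $k$, so that bit is already committed in $\psi(X \upharpoonright n)$.

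It remains to package $\psi$ into a Turing computation. I would take $A$ to be a real coding the (entirely arbitrary) function $\sigma \mapsto \psi(\sigma)$ under a fixed Gödel numbering of $2^{<\omega}$, and design $\Phi_e$ so that, on oracle $A \oplus X$ and input $k$, it searches for the least $n$ with $|\psi(X \upharpoonright n)| > k$, reading the initial segment $X \upharpoonright n$ from the $X$-side and the value $\psi(X \upharpoonright n)$ from the $A$-side, and then outputs $\psi(X \upharpoonright n)(k)$. The convergence property guarantees the search halts for every $k$ and $X$, and monotonicity guarantees the answer is independent of the witness $n$, so $\Phi_e(A \oplus X) = \Psi(X)$. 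The only point requiring care is that no effectivity of $\psi$ is needed, since the statement permits an arbitrary oracle $A$; the main obstacle is thus purely in the bookkeeping, namely fixing the finite-string representation so that it is simultaneously total, monotone, and convergent, after which the operator is read off mechanically.
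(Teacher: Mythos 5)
Your proof is correct and complete in both directions: the use principle gives continuity, and the modulus-of-continuity function $\psi$ (made total by the length cap, monotone, and convergent) coded into the oracle $A$ gives the Turing operator. The paper states this proposition as folklore and supplies no proof of its own, so there is nothing to compare against; your argument is the standard one.
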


\begin{definition}\label{def:cantor-embedding}
  For $i\in\{0, 1\}$, let $\mathfrak{K}_i$ be a class of $L_i$-structures. A mapping $\Psi$ is a \emph{Cantor-continuous embedding} of $\mathfrak{K}_0$ into $\mathfrak{K}_1$,
  denoted by $\Psi\colon \mathfrak{K}_0 \embedCantor \mathfrak{K}_1$, if $\Psi$ is Cantor-continuous and satisfies the following:
  \begin{enumerate}
  \item
    For any $\A \in \mathfrak{K}_0$, $\Psi(\D(\A))$ is the characteristic function of the atomic diagram of a structure from $\mathfrak{K}_1$.
    This structure is denoted by $\Psi(\A)$.
  \item
    For any $\A, \B \in\mathfrak{K}_0$, we have $\A \cong \B$ if and only if $\Psi(\A) \cong \Psi(\B)$.
  \end{enumerate}
\end{definition}

When the embedding $\mathfrak{K}_0 \embedCantor \mathfrak{K}_1$ is given by a Turing operator $\Phi_e$, then we say that
$\mathfrak{K}_0$ is \emph{Turing computable embeddable} into $\mathfrak{K}_1$, and we denote this by $\Phi_e \colon \mathfrak{K}_0 \leq_{tc} \mathfrak{K}_1$.
The study of this notion was initiated in \cite{calvert_knight_2004,knight_turing_2007}. 
One of the main tools in proving results about the Turing computable embeddability is the following Pullback Theorem.
Here the $\Sigma^c_\alpha$ formulas are the usual computable infinitary $\Sigma_\alpha$ formulas as defined in \cite{ash_knight_book}. A $\Sigma^{\text{inf}}_{\alpha}$ formula is an infinitary $\Sigma_\alpha$ formula.

\begin{theorem}[Pullback Theorem \cite{knight_turing_2007}]\label{th:turing:pullback}
  Let $\Phi_e \colon \mathfrak{K} \leq_{tc} \mathfrak{K}'$.
  Then for any computable infinitary sentence $\varphi'$ in the signature of $\mathfrak{K}'$,
  we can \emph{effectively} find a computable infinitary sentence $\varphi$ in the signature of $\mathfrak{K}$ such that for all $\A \in \mathfrak{K}$,
  \[\A \models \varphi \text{ if and only if } \Phi_e(\A) \models \varphi'.\]
  Moreover, for a nonzero $\alpha < \omega^{CK}_1$, if $\varphi'$ is $\Sigma^{c}_{\alpha}$ (or $\Pi^{c}_{\alpha}$), then so is $\varphi$.
\end{theorem}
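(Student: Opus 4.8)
The plan is to prove, by effective transfinite induction on $\alpha$, a uniform statement about \emph{formulas with free variables}, and then specialize to sentences. Throughout write $\B = \Phi_e(\A)$. The crucial input is that the Turing operator $\Phi_e$ computes the characteristic function of $\D(\B)$ from that of $\D(\A)$, so by the use principle (the continuity of $\Phi_e$ underlying Proposition~\ref{prop:cantor:turing:relative}) the truth of any single atomic fact about $\B$ — including the fact that a given number $b$ lies in $\dom(\B)$, witnessed by the atomic sentence ``$b=b$'' — is decided by a finite fragment of $\D(\A)$. The induction tracks, for each computable $\Sigma^c_\alpha$ (resp. $\Pi^c_\alpha$) formula $\varphi'(v_1,\dots,v_n)$ in the signature of $\mathfrak{K}'$ and each tuple $\bar b=(b_1,\dots,b_n)\in\omega^n$, a computable $\Sigma^c_\alpha$ (resp. $\Pi^c_\alpha$) $L$-sentence $(\varphi')^{\ast}_{\bar b}$, produced effectively and uniformly in $\bar b$, such that for all $\A\in\mathfrak{K}$,
\[
  \A \models (\varphi')^{\ast}_{\bar b}
  \quad\Longleftrightarrow\quad
  \bar b \subseteq \dom(\B)\ \text{ and }\ \B \models \varphi'(\bar b).
\]
The theorem is then the case $n=0$.

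For the base case, let $\theta(\bar v)$ be atomic and fix $\bar b$. The conjunction of ``$\bar b\subseteq\dom(\B)$'' with ``$\B\models\theta(\bar b)$'' is a statement about $\D(\B)$, hence is forced by some finite condition on $\D(\A)$, and the set of finite conditions $\sigma$ forcing $\Phi_e$ to confirm it is computably enumerable. Each such $\sigma$ asserts that finitely many atomic $L$-facts about specific numbers hold and finitely many fail. Since the target language has no constants for elements of $\A$, I would existentially quantify over the numbers mentioned in $\sigma$ — encoding their distinctness via $\neq$, which is in the signature — to obtain a finitary existential $L$-formula; the computable disjunction over all forcing conditions $\sigma$ yields the required $\Sigma^c_1$ sentence $(\theta)^{\ast}_{\bar b}$.

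For the inductive step, the connectives (including the infinitary computable disjunctions and conjunctions) translate to the same connectives applied to the already-constructed pullbacks. An existential block is handled by $(\exists u\,\psi')^{\ast}_{\bar b} := \bigvee_{c\in\omega}(\psi')^{\ast}_{\bar b,c}$, a computable disjunction over candidate witnesses $c\in\omega$; those $c\notin\dom(\B)$ contribute false disjuncts automatically through the $\dom(\B)$-clause of the induction hypothesis. Dually, a universal block becomes the corresponding computable conjunction. Because the inner formulas of lower rank $\beta<\alpha$ in the $\Sigma/\Pi$ alternation are supplied by the induction hypothesis, the complexity level $\alpha$ is preserved and the construction stays within $\Lcw$.

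The main obstacle is the mismatch between the two domains: the elements of $\B$ are named by natural numbers bearing no a priori relation to $\A$, while the target signature provides no constants for elements of $\A$. Both difficulties are resolved in the base case, by combining the use principle with existential quantification over the witnessing elements of $\A$. What remains is bookkeeping, but it is essential: one must check that every step is effective and uniform in $\bar b$ and in an index for $\varphi'$, so that the disjunctions and conjunctions over $\omega$ are genuinely computable and the passage $\varphi'\mapsto\varphi$ is effective. This uniformity is exactly what the effective transfinite recursion theorem delivers, and it is also what justifies the ``Moreover'' clause on the preservation of the level $\alpha$.
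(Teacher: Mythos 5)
The paper does not prove this theorem itself --- it is imported from \cite{knight_turing_2007} --- so I am measuring your argument against the forcing proof given there. Your overall architecture (effective transfinite recursion, the use principle to reduce atomic facts about $\B = \Phi_e(\A)$ to finite conditions on $\D(\A)$, existential quantification over the numbers a condition mentions, c.e.\ disjunctions) is the standard one, but the induction invariant you chose cannot be proved, and it breaks exactly at the step you describe as ``resolved in the base case.'' Your invariant asserts that the $L$-sentence $(\varphi')^{\ast}_{\bar b}$ is equivalent, over all $\A\in\mathfrak{K}$, to ``$\bar b\subseteq\dom(\B)$ and $\B\models\varphi'(\bar b)$.'' The left-hand side is a sentence in the signature of $\mathfrak{K}$ and hence isomorphism-invariant; the right-hand side is not, because $\Phi_e(\A)$ as a \emph{labelled} structure depends on the concrete presentation of $\A$, and the formula names the specific numbers $\bar b$. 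Already for $\Phi_e$ the identity operator, $\theta(v)\equiv (v=v)$ and $\bar b=(5)$, the right-hand side reads ``$5\in\dom(\A)$,'' which no $L$-sentence can express on an isomorphism-closed class. Concretely, the left-to-right direction of your base case fails: $\A$ may satisfy a disjunct $\exists\bar z\,\delta_\sigma(\bar z)$ via witnesses located at numbers \emph{other} than those mentioned by $\sigma$, in which case $\sigma$ is not actually an initial segment of $\chi_{\D(\A)}$ and the real computation $\Phi_e^{\chi_{\D(\A)}}$ need not confirm $\theta(\bar b)$.

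The missing idea is that converting forcing conditions (which name concrete numbers) into existential $L$-formulas is only sound for isomorphism-invariant statements, and justifying it requires clause (2) of Definition~\ref{def:cantor-embedding}: one passes to the renamed copy $\A^{\ast}\cong\A$ whose diagram really extends $\sigma$, uses $\A^{\ast}\cong\A\Rightarrow\Phi_e(\A^{\ast})\cong\Phi_e(\A)$, and concludes because satisfaction of the \emph{sentence} $\varphi'$ is preserved under isomorphism. Your proof never invokes the hypothesis that $\Phi_e$ preserves isomorphism, and the theorem is false without it: take a class with a single isomorphism type and an operator whose output's isomorphism type depends on whether $0\in\dom(\A)$; then $\exists x\,P(x)$ has no pullback. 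The proof in \cite{knight_turing_2007} therefore keeps the conditions literal (tied to the concrete $\chi_{\D(\A)}$) throughout the recursion, proving a ``forcing equals truth'' lemma for the given presentation, and performs the existential-quantification/renaming step once, at the level of the whole sentence, where isomorphism-invariance is available. You would need to restructure your induction along these lines; the rest of your bookkeeping (handling of $\dom(\B)$ via $b=b$, the treatment of quantifier blocks, the effectivity and the preservation of the level $\alpha$) is fine.
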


As noted in \cite{bazhenov_learning_2020}, Theorem~\ref{th:turing:pullback}, can be relativized to an arbitrary oracle $X$.
By Proposition~\ref{prop:cantor:turing:relative}, we directly obtain the following non-effective version of Theorem~\ref{th:turing:pullback}.

\begin{corollary}[Non-effective Pullback Theorem]\label{cor:cantor:pullback}
  Let $\Psi \colon \K \embedCantor \K'$.
  Then for any infinitary sentence $\varphi'$ in the signature of $\K'$,
  there exists an infinitary sentence $\varphi$ in the signature of $\K$ such that for all $\A \in \K$,
  \[\A \models \varphi \text{ if and only if } \Psi(\A) \models \varphi'.\]
  Moreover, for a nonzero $\alpha < \omega_1$, if $\varphi'$ is $\Sigma^{\textnormal{inf}}_{\alpha}$ (or $\Pi^{\textnormal{inf}}_{\alpha}$), then so is $\varphi$.
\end{corollary}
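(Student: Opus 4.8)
The plan is to reduce the statement to the effective Pullback Theorem (Theorem~\ref{th:turing:pullback}) by relativizing it to a suitably chosen oracle. First I would invoke Proposition~\ref{prop:cantor:turing:relative}: since $\Psi \colon \K \embedCantor \K'$ is Cantor-continuous, there is a Turing operator $\Phi_e$ and a real $A \in 2^\omega$ with $\Psi(X) = \Phi_e(A \oplus X)$ for every $X \in 2^\omega$. In particular, $\Psi$ acts on basic diagrams exactly as the $A$-computable operator $X \mapsto \Phi_e(A \oplus X)$, which I denote $\Phi_e^A$. Conditions (1) and (2) of Definition~\ref{def:cantor-embedding} then say precisely that $\Phi_e^A$ witnesses an $A$-relativized Turing computable embedding, $\Phi_e^A \colon \K \leq_{tc}^A \K'$.

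Next I would appeal to the relativization of Theorem~\ref{th:turing:pullback} already noted in the text: for any oracle $Y$, if a $Y$-computable operator $\Phi^Y$ witnesses $\K \leq_{tc}^Y \K'$, then every $Y$-computable infinitary sentence $\varphi'$ pulls back to a $Y$-computable infinitary sentence $\varphi$ satisfying $\A \models \varphi \iff \Phi^Y(\A) \models \varphi'$ for all $\A \in \K$, with the relativized complexity $\Sigma^{c,Y}_\alpha$ (resp.\ $\Pi^{c,Y}_\alpha$) preserved for every nonzero $\alpha < \omega_1^{CK,Y}$.

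The bridge from this effective statement to the arbitrary infinitary one is the standard fact that every infinitary formula of rank below $\omega_1$ is computable relative to some oracle. Given an arbitrary $\Sigma^{\textnormal{inf}}_\alpha$ sentence $\varphi'$ with $\alpha < \omega_1$, I would fix a real $B$ that simultaneously codes the countable parse tree of $\varphi'$ and a well-ordering of order type $\alpha$; then $\varphi'$ is a $\Sigma^{c,B}_\alpha$ sentence and $\alpha < \omega_1^{CK,B}$. Taking the combined oracle $Y = A \oplus B$, I get simultaneously that $\Phi_e^A$ is $Y$-computable (since $A \leq_T Y$), that $\varphi'$ is $\Sigma^{c,Y}_\alpha$ (since $B \leq_T Y$), and that $\alpha < \omega_1^{CK,Y}$.

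Finally, applying the $Y$-relativized Pullback Theorem yields a $\Sigma^{c,Y}_\alpha$ sentence $\varphi$ with $\A \models \varphi \iff \Psi(\A) \models \varphi'$ for all $\A \in \K$. Since every $\Sigma^{c,Y}_\alpha$ sentence is in particular an infinitary $\Sigma_\alpha$ sentence, $\varphi$ is $\Sigma^{\textnormal{inf}}_\alpha$, as required, and the $\Pi$-case is symmetric. The only step demanding care is the uniform choice of the oracle $Y$ so that it simultaneously tames the embedding operator, the formula $\varphi'$, and the ordinal $\alpha$; once this is arranged the conclusion is immediate, and since no effectivity is claimed for the passage from $\varphi'$ to $\varphi$, the loss of effectivity in choosing $Y$ is precisely what makes the result only the ``non-effective'' version.
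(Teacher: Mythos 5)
Your proposal is correct and follows exactly the route the paper takes: the paper derives the corollary by relativizing Theorem~\ref{th:turing:pullback} to an oracle and combining this with Proposition~\ref{prop:cantor:turing:relative}, which is precisely your argument. The paper leaves the details implicit ("we directly obtain"), whereas you spell out the key bookkeeping step --- choosing a single oracle $Y = A \oplus B$ that simultaneously computes the operator, the parse tree of $\varphi'$, and a well-ordering of type $\alpha$ so that $\alpha < \omega_1^{CK,Y}$ --- which is exactly the right way to make the relativization precise.
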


\section{Scott-Continuous Embeddings} \label{sect:Scott-Continuous}

The Scott topology, denoted by $\Pw$, can be characterized as the product topology of the Sierpi{\'n}ski space on $\{0,1\}$.
The Sierpi{\'n}ski space on $\{0,1\}$ is the topological space with open sets $\{\emptyset, \{1\}, \{0,1\}\}$.
A basis for $\Pw$ is formed by the collection $[D] = \{A \subseteq \omega : D \subseteq A\}$, for all finite sets $D$.

\begin{definition}[Case \cite{case_enumeration_1971}]
  A set $A \in \Pw$ defines a \emph{generalized enumeration operator} $\Gamma_A : \Pw \to \Pw$ if and only if for each set $B \in \Pw$,
  \[\Gamma_A(B) = \{x : \exists v (\pair{x,v} \in A\ \&\ D_v \subseteq B)\}.\]
\end{definition}
When $A = W_e$ for some c.e.\ set $W_e$, we write $\Gamma_e$ instead of $\Gamma_{W_e}$, which is the usual enumeration operator as defined in \cite{case_enumeration_1971} and \cite{cooper_enumeration_1990}, for example.

\begin{proposition}[Folklore]\label{prop:scott:enumeration}
  A mapping $\Gamma: \Pw \to \Pw$ is Scott-continuous if and only if $\Gamma$ is a generalized enumeration operator.
\end{proposition}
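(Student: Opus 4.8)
The plan is to prove both implications by working with the subbasic open sets of the Scott topology, namely $U_x = \{C \subseteq \omega : x \in C\}$ for $x \in \omega$. Since the codomain $\Pw$ carries the product of Sierpiński topologies, whose nonempty nontrivial factor open $\{1\}$ pulls back under the $x$-th projection to exactly $U_x$, the family $\{U_x : x \in \omega\}$ is a subbasis. Hence $\Gamma$ is Scott-continuous if and only if $\Gamma^{-1}(U_x)$ is open for every $x$. Throughout I would use that a set is open precisely when it is a union of basic sets $[D]$ with $D$ finite, so that membership of an input $B$ in an open set is always witnessed by a finite subset $D \subseteq B$.

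For the direction that every generalized enumeration operator is Scott-continuous, I would fix $A$ and compute, for each $x$,
\[\Gamma_A^{-1}(U_x) = \{B : x \in \Gamma_A(B)\} = \{B : \exists v\,(\pair{x,v} \in A \ \&\ D_v \subseteq B)\} = \bigcup_{\pair{x,v} \in A} [D_v].\]
This is a union of basic open sets, hence open, so $\Gamma_A$ is continuous.

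For the converse, I would first derive monotonicity from continuity: if $B_1 \subseteq B_2$ and $x \in \Gamma(B_1)$, then $B_1$ lies in the open set $\Gamma^{-1}(U_x)$, so some basic $[E]$ satisfies $E \subseteq B_1$ and $[E] \subseteq \Gamma^{-1}(U_x)$; since $E \subseteq B_2$ as well, $B_2 \in [E]$ gives $x \in \Gamma(B_2)$. Then I would define $A = \{\pair{x,v} : x \in \Gamma(D_v)\}$ and check $\Gamma = \Gamma_A$. The inclusion $\Gamma_A(B) \subseteq \Gamma(B)$ follows from monotonicity: if $\pair{x,v} \in A$ and $D_v \subseteq B$, then $x \in \Gamma(D_v) \subseteq \Gamma(B)$. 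For $\Gamma(B) \subseteq \Gamma_A(B)$, if $x \in \Gamma(B)$ then $B \in \Gamma^{-1}(U_x)$, and openness supplies a finite $D_v \subseteq B$ with $[D_v] \subseteq \Gamma^{-1}(U_x)$; evaluating at the point $D_v$ itself yields $x \in \Gamma(D_v)$, so $\pair{x,v} \in A$ together with $D_v \subseteq B$ witnesses $x \in \Gamma_A(B)$.

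The one step that carries all the content is this last extraction of a finite witness: the definition of the Scott topology guarantees that membership in the open preimage $\Gamma^{-1}(U_x)$ is already secured by a finite subset of the input, which is exactly the finite-use feature encoded in the enumeration-operator formula. The remaining verifications are routine bookkeeping, so I do not expect any genuine obstacle beyond being careful that the finite set $E$ produced by openness is recorded by its canonical index $v$ with $D_v = E$.
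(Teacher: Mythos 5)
Your proof is correct. Note that the paper itself offers no proof of this proposition --- it is stated as folklore --- so there is nothing to compare against; your argument is the standard one. Both directions check out: the computation $\Gamma_A^{-1}(U_x) = \bigcup_{\pair{x,v}\in A}[D_v]$ establishes continuity of any generalized enumeration operator, and in the converse direction the extraction of a finite $D_v \subseteq B$ with $[D_v] \subseteq \Gamma^{-1}(U_x)$, followed by evaluating $\Gamma$ at the point $D_v$ itself, is exactly the right way to recover the operator as $\Gamma_A$ with $A = \{\pair{x,v} : x \in \Gamma(D_v)\}$. You correctly identify that the sets $U_x$ form a subbasis for the product of Sierpi\'nski topologies, so checking openness of their preimages suffices; as a byproduct your argument also yields the monotone-plus-compact characterization that the paper records separately as Corollary~\ref{cor:scott-continuous:characterization}.
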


As a direct corollary of Proposition~\ref{prop:scott:enumeration}, the following characterization will be useful.
\begin{corollary}\label{cor:scott-continuous:characterization}
  A mapping $\Psi:\Pw \to \Pw$ is Scott-continuous if and only if $\Psi$ is
  \begin{enumerate}[(a)]
  \item
    monotone, i.e., $A \subseteq B$ implies $\Psi(A) \subseteq \Psi(B)$, and
  \item
    compact, i.e., $x \in \Psi(A)$ if and only if $x \in \Psi(D)$ for some finite $D \subseteq A$.
  \end{enumerate}
\end{corollary}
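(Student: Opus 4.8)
The plan is to prove the folklore characterization in Corollary~\ref{cor:scott-continuous:characterization} by directly unpacking what Scott-continuity means for the basis described just above, namely the sets $[D] = \{A \subseteq \omega : D \subseteq A\}$ for finite $D$. The two conditions (monotonicity and compactness) are exactly the topological statement that $\Psi$ is continuous with respect to the Scott topology, so the whole content is translating ``preimages of basic opens are open'' into these two elementary conditions. I would carry this out as two implications, and the cleanest route uses Proposition~\ref{prop:scott:enumeration}: since that proposition already identifies the Scott-continuous maps with the generalized enumeration operators $\Gamma_A$, I can read off both (a) and (b) from the defining formula $\Gamma_A(B) = \{x : \exists v (\pair{x,v} \in A\ \&\ D_v \subseteq B)\}$.

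\medskip

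For the forward direction, assume $\Psi$ is Scott-continuous, so by Proposition~\ref{prop:scott:enumeration} we have $\Psi = \Gamma_A$ for some $A$. Monotonicity (a) is immediate from the formula: if $x \in \Gamma_A(B)$, witnessed by some $v$ with $\pair{x,v} \in A$ and $D_v \subseteq B$, then $D_v \subseteq B'$ for any $B' \supseteq B$, so the same $v$ witnesses $x \in \Gamma_A(B')$. Compactness (b) is equally direct: if $x \in \Gamma_A(B)$ with witness $v$, then the finite set $D := D_v \subseteq B$ already gives $x \in \Gamma_A(D)$; conversely, if $x \in \Gamma_A(D)$ for some finite $D \subseteq B$, then monotonicity gives $x \in \Gamma_A(B)$. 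I would present these as one or two lines each.

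\medskip

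For the converse, suppose $\Psi$ is monotone and compact; I want to exhibit it as a generalized enumeration operator, after which Proposition~\ref{prop:scott:enumeration} yields Scott-continuity. The natural candidate is to define $A = \{\pair{x,v} : x \in \Psi(D_v)\}$, where $v \mapsto D_v$ is the canonical indexing of finite sets, and then check $\Psi(B) = \Gamma_A(B)$ for every $B$. The inclusion $\Gamma_A(B) \subseteq \Psi(B)$ follows from monotonicity: if $\pair{x,v} \in A$ and $D_v \subseteq B$, then $x \in \Psi(D_v) \subseteq \Psi(B)$. The reverse inclusion $\Psi(B) \subseteq \Gamma_A(B)$ is exactly compactness: if $x \in \Psi(B)$, then there is a finite $D \subseteq B$ with $x \in \Psi(D)$, and taking $v$ with $D_v = D$ gives $\pair{x,v} \in A$ with $D_v \subseteq B$, so $x \in \Gamma_A(B)$.

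\medskip

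I expect there to be essentially no hard part here, since the corollary is a formal rephrasing and all the topological work is already packaged into Proposition~\ref{prop:scott:enumeration}. The only point requiring a small amount of care is the bookkeeping with the canonical finite-set coding $D_v$ in the converse direction — making sure the set $A$ is well-defined and that the witnesses $v$ line up with the finite subsets of $B$. If one preferred to avoid invoking Proposition~\ref{prop:scott:enumeration} and instead argue purely topologically, the one subtlety would be verifying that the preimage $\Psi^{-1}([\{x\}])$ of a subbasic open set is open precisely when both (a) and (b) hold; but routing through the enumeration-operator proposition sidesteps this entirely, which is why I would take that route.
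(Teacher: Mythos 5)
Your proof is correct and follows exactly the route the paper intends: the corollary is stated there without proof as a direct consequence of Proposition~\ref{prop:scott:enumeration}, and your two directions (reading (a) and (b) off the formula for $\Gamma_A$, and conversely defining $A = \{\pair{x,v} : x \in \Psi(D_v)\}$ to realize a monotone compact $\Psi$ as a generalized enumeration operator) are the standard way to fill that in. No issues.
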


We define Scott-continuous embedding for classes of structures as an analogue of the Cantor-continuous embedding from Definition~\ref{def:cantor-embedding}.
Here we take into consideration only the \emph{positive} atomic diagram $\Dp(\A)$ of a structure $\A$,
and not the basic (positive and negative) atomic diagram $\D(\A)$ as in \cite{knight_turing_2007}.

\begin{definition}\label{def:scott-embedding}
  A mapping $\Gamma$ is a \emph{Scott-continuous embedding} of $\mathfrak{K}_0$ into $\mathfrak{K}_1$,
  denoted by $\Gamma\colon \mathfrak{K}_0 \embedScott \mathfrak{K}_1$, if $\Gamma$ is Scott-continuous and satisfies the following:
  \begin{enumerate}
  \item
    For any $\A \in \mathfrak{K}_0$, $\Gamma(\Dp(\A))$ is the positive atomic diagram of a structure from $\mathfrak{K}_1$.
    This structure is denoted by $\Gamma(\A)$.
  \item
    For any $\A, \B \in\mathfrak{K}_0$, we have $\A \cong \B$ if and only if $\Gamma(\A) \cong \Gamma(\B)$.
  \end{enumerate}
\end{definition}

If we consider enumeration operators $\Gamma_e$, we obtain an effective version of Definition~\ref{def:scott-embedding}.
We say that $\Gamma_e$ is a \emph{positive computable embedding} of $\mathfrak{K}_0$ into $\mathfrak{K}_1$,
and we denote it by $\Gamma_e \colon \mathfrak{K}_0 \leq_{pc} \mathfrak{K}_1$. Positive computable embeddings were first studied in \cite{bazhenov_lopez-escobar_2023}.
To obtain an analogue of Theorem~\ref{th:turing:pullback} for positive computable embeddings, we need to define a hierarchy of \emph{positive} infinitary formulas.

\begin{definition}[\cite{bazhenov_lopez-escobar_2023}]\label{def:posinf}
  Fix a countable signature $L$. For every $\alpha<\omega_1$ define the sets of $\Sigma^p_\alpha$ and
  $\Pi^p_\alpha$ $L$-formulas inductively as follows.
\begin{itemize}
\item
  Let $\alpha = 0$. Then:
  \begin{itemize}
  \item
    the $\Sigma^p_0$ formulas are the finite conjunctions of atomic $L$-formulas.
  \item
    the $\Pi^p_0$ formulas are the finite disjunctions of \emph{negations} of
    atomic $L$-formulas.
  \end{itemize}
\item
  Let $\alpha = 1$. Then:
  \begin{itemize}
  \item
    $\varphi(\bar{u})$ is a $\Sigma^p_1$ formula if it has the form
    \[\varphi(\bar{u}) = \vvee_{i\in I} \exists \bar{x}_i \psi_i(\bar{u},\bar{x}_i), \]
    where for each $i \in I$, $\psi_i(\bar{u},\bar{x}_i)$ is a $\Sigma^p_0$ formula, $I$ is countable.
  \item
    $\varphi(\bar{u})$ is a $\Pi^p_1$ formula if it has the form
    \[\varphi(\bar{u}) = \wwedge_{i\in I} \forall \ov{x}_i \psi_i(\bar{u},\bar{x}_i), \]
    where for each $i \in I$, $\psi_i(\bar{u},\bar{x}_i)$ is a $\Pi^p_0$ formula, $I$ is countable.    
  \end{itemize}
\item
  Let $\alpha \geq 2$. Then:
  \begin{itemize}
  \item
    $\varphi(\bar{u})$ is $\Sigma^p_{\alpha}$ formula if it has the form
    \[\varphi(\bar{u}) = \vvee_{i\in I} \exists \bar{x}_i(\xi_i(\bar{u},\bar{x}_i) \land \psi_i(\bar{u},\bar{x}_i)), \]
    where for each $i \in I$, $\xi_i(\bar{u},\bar{x}_i)$ is a $\Sigma^p_{\beta_i}$ formula and $\psi_i(\bar{u},\bar{x}_i)$ is a $\Pi^p_{\beta_i}$ formula, for some $\beta_i < \alpha$ and $I$ countable.
  \item
    $\varphi(\bar{u})$ is $\Pi^p_{\alpha}$ formula if it has the form
    \[\varphi(\bar{u}) = \wwedge_{i\in I} \forall \bar{x}_i(\xi_i(\bar{u},\bar{x}_i) \lor \psi_i(\bar{u},\bar{x}_i)), \]
    where for each $i \in I$, $\xi_i(\bar{u},\bar{x}_i)$ is a $\Sigma^p_{\beta_i}$ formula and $\psi_i(\bar{u},\bar{x}_i)$ is a $\Pi^p_{\beta_i}$ formula, for some $\beta_i < \alpha$ and $I$ countable.
  \end{itemize}
\end{itemize}
\end{definition}

A similar hierarchy of positive infinitary formulas can be also found in \cite{soskov_degree_2004}, where it is used in connection with the $\alpha$-th enumeration jump.
As usual, when we restrict to c.e.\ index sets $I$ in the above definition, we obtain the hierarchy of \emph{positive computable infinitary formulas}.
For this hierarchy, we will use the notations $\Sigma^{pc}_\alpha$ and $\Pi^{pc}_\alpha$.

\begin{theorem}[Pullback Theorem \cite{bazhenov_lopez-escobar_2023}]\label{th:scott:pullback}
  Let $\Gamma_e \colon \mathfrak{K} \leq_{pc} \mathfrak{K}'$.
  Then for any positive computable infinitary sentence $\varphi'$ in the signature of $\mathfrak{K}'$,
  we can \emph{effectively} find a positive computable infinitary sentence $\varphi$ in the signature of $\mathfrak{K}$ such that for all $\A \in \mathfrak{K}$,
  \[\A \models \varphi \text{ if and only if } \Gamma_e(\A) \models \varphi'.\]
  Moreover, for a nonzero $\alpha < \omega^{CK}_1$, if $\varphi'$ is $\Sigma^{pc}_{\alpha}$ (or $\Pi^{pc}_{\alpha}$), then so is $\varphi$.
\end{theorem}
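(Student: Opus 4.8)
The plan is to prove Theorem~\ref{th:scott:pullback} by induction on the complexity $\alpha$ of the target sentence $\varphi'$, mirroring the proof of the Turing computable Pullback Theorem (Theorem~\ref{th:turing:pullback}) but carefully tracking \emph{positivity}. Since the induction has to pass through subformulas carrying free variables, I would first strengthen the statement to formulas with parameters: for every positive computable infinitary formula $\varphi'(\bar u)$ in the signature of $\mathfrak{K}'$ and every tuple $\bar b$ of elements of $\omega$, I will effectively produce a positive computable infinitary sentence $(\varphi')^{\bar b}$ in the signature of $\mathfrak{K}$, uniformly in $\bar b$ and an index for $\varphi'$, so that $\Gamma_e(\A)\models\varphi'(\bar b)$ if and only if $\A\models(\varphi')^{\bar b}$ for all $\A\in\mathfrak{K}$. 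Before starting I would record the routine closure properties of the hierarchy needed for the bookkeeping, namely that $\Sigma^{pc}_\beta$ is closed under finite conjunction and c.e.\ disjunction, $\Pi^{pc}_\beta$ under finite disjunction and c.e.\ conjunction, and that $\Sigma^{pc}_1\subseteq\Sigma^{pc}_\beta$ and $\Pi^{pc}_1\subseteq\Pi^{pc}_\beta$ for $1\leq\beta$; these are established in \cite{bazhenov_lopez-escobar_2023}.

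The heart of the argument is the base case, where I exploit that $\Gamma_e$ is an enumeration operator. For a target atomic sentence $\theta'(\bar b)$, the condition $\Gamma_e(\A)\models\theta'(\bar b)$ is equivalent to $\code{\theta'(\bar b)}\in\Gamma_e(\Dp(\A))$, which by the definition of the operator unwinds to $\exists v(\pair{\code{\theta'(\bar b)},v}\in W_e\ \&\ D_v\subseteq\Dp(\A))$. Since $D_v\subseteq\Dp(\A)$ is just the finite conjunction of the atomic $\A$-facts listed in $D_v$, i.e.\ a $\Sigma^p_0$ formula, the whole condition is the c.e.\ disjunction over the relevant $v$ of $\Sigma^p_0$ formulas, hence a $\Sigma^{pc}_1$ sentence about $\A$. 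Dually, a negated target atomic $\neg\theta'(\bar b)$ pulls back to the c.e.\ conjunction over these $v$ of the $\Pi^p_0$ formulas asserting that some fact of $D_v$ fails, hence to a $\Pi^{pc}_1$ sentence. This is precisely where positivity is forced on us: only positive atomic facts about $\A$ feed the operator, so negative information about $\Gamma_e(\A)$ cannot be pulled back directly, which is exactly why the target hierarchy must itself be positive.

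Next I would handle the quantifiers, which is the subtle step. A target quantifier ranges over $\dom(\Gamma_e(\A))\subseteq\omega$, so I translate $\exists\bar x_i$ into a c.e.\ disjunction and $\forall\bar x_i$ into a c.e.\ conjunction over all $\bar c_i\in\omega$, each guarded by a domain condition. Membership $\bar c_i\in\dom(\Gamma_e(\A))$ is itself $\Sigma^{pc}_1$ (it asserts that an equality fact lands in $\Gamma_e(\Dp(\A))$), so for $\Sigma^p_\alpha$-formulas each disjunct is guarded by this $\Sigma^{pc}_1$ condition; dually, $\bar c_i\notin\dom(\Gamma_e(\A))$ is $\Pi^{pc}_1$, and for $\Pi^p_\alpha$-formulas each conjunct is guarded by this $\Pi^{pc}_1$ condition. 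The inductive step then applies the hypothesis to the subformulas $\xi_i$ ($\Sigma^p_{\beta_i}$) and $\psi_i$ ($\Pi^p_{\beta_i}$) and reassembles via the closure lemmas: for the $\Sigma$-case, the guarded disjunction $\vvee_{i,\bar c_i}(\text{guard}\wedge(\xi_i)^{\bar b\bar c_i}\wedge(\psi_i)^{\bar b\bar c_i})$ collapses into the shape $\vvee\exists(\Sigma^{pc}_{\beta_i}\wedge\Pi^{pc}_{\beta_i})$, i.e.\ $\Sigma^{pc}_\alpha$; for the $\Pi$-case, the guarded conjunction $\wwedge_{i,\bar c_i}((\bar c_i\notin\dom)\vee(\xi_i)^{\bar b\bar c_i}\vee(\psi_i)^{\bar b\bar c_i})$ collapses into $\wwedge(\Sigma^{pc}_{\beta_i}\vee\Pi^{pc}_{\beta_i})$, i.e.\ $\Pi^{pc}_\alpha$.

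The main obstacle I anticipate is the complexity bookkeeping in the inductive step: one must verify that folding in the level-$1$ domain guards and absorbing them into the $\Sigma^{pc}_{\beta_i}\wedge\Pi^{pc}_{\beta_i}$ (respectively $\Sigma^{pc}_{\beta_i}\vee\Pi^{pc}_{\beta_i}$) blocks never raises the level. This is exactly what the inclusions $\Sigma^{pc}_1\subseteq\Sigma^{pc}_{\beta_i}$ and $\Pi^{pc}_1\subseteq\Pi^{pc}_{\beta_i}$ together with the closure properties guarantee, provided $\beta_i\geq 1$; the edge case $\beta_i=0$ is dispatched by first noting that, by the base case, a $\Sigma^p_0$ (resp.\ $\Pi^p_0$) subformula already pulls back one level up to $\Sigma^{pc}_1$ (resp.\ $\Pi^{pc}_1$), after which the same absorption applies. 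The levels $\alpha\leq 1$ are handled directly by the same scheme, landing in $\Sigma^{pc}_1$ and $\Pi^{pc}_1$, which is consistent with the claim for nonzero $\alpha$. Effectiveness is immediate throughout, since enumerating the witnesses $v$, forming the domain guards, and invoking the inductive translations are all carried out uniformly in $\bar b$ and in an index for $\varphi'$.
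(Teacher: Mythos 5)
The paper itself does not prove Theorem~\ref{th:scott:pullback}: it is imported from \cite{bazhenov_lopez-escobar_2023}, where it is obtained by adapting the forcing-style argument behind Theorem~\ref{th:turing:pullback} from \cite{knight_turing_2007} to positive diagrams and enumeration operators. Your base case captures the essential new point correctly: membership of an atomic fact in $\Gamma_e(\Dp(\A))$ unwinds to a c.e.\ disjunction of finite conjunctions of positive atomic facts about $\A$, which is why the pullback lands in the \emph{positive} hierarchy and why negative information about $\Gamma_e(\A)$ cannot be pulled back.

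There is, however, a genuine gap. Every formula your induction produces --- the pulled-back conditions $\bigwedge_{\theta\in D_v}\theta$, the domain guards, and hence the entire tower built on them --- names specific natural numbers from $\dom(\A)$. These are sentences in the signature $L\cup\omega$, not in the signature $L$ of $\mathfrak{K}$ as the theorem requires, and they are not isomorphism-invariant; this matters downstream, since the proof of (2)$\Rightarrow$(3) uses the pullback sentences to separate the $\B_i$ up to isomorphism. Eliminating these parameters is precisely the nontrivial content of the Pullback Theorem. You cannot simply existentially close at the end: infinitely many parameters occur, and they sit inside the $\Pi$-blocks (domain guards, negated atomics) as well as the $\Sigma$-blocks. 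The standard remedy is to carry a finite forcing condition (a finite positive diagram $\sigma$) through the induction, define a forcing relation $\sigma\Vdash\varphi'(\bar b)$, establish the density facts needed for the universal case (a condition forces a $\Pi$-formula iff no extension forces the failure of an instance), and only then read off the $L$-sentence as $\vvee_{\sigma\Vdash\varphi'}\exists\bar x\,\sigma(\bar x)$ with the numbers in $\sigma$ replaced by bound variables. Your universal-quantifier step in particular does not survive without this: the guarded c.e.\ conjunction over all tuples $\bar c_i\in\omega$ is a statement about one fixed presentation of $\A$, and re-presenting $\A$ changes which finite sets $D_v$ are contained in $\Dp(\A)$, so the equivalence you assert is not preserved under isomorphism.
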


Since Theorem~\ref{th:scott:pullback} can be relativized, it is straightforward to obtain a non-effective version.

\begin{corollary}
  Let $\Gamma \colon \mathfrak{K} \embedScott \mathfrak{K}'$.
  Then for any positive infinitary sentence $\varphi'$ in the signature of $\mathfrak{K}'$,
  there exists a positive infinitary sentence $\varphi$ in the signature of $\mathfrak{K}$ such that for all $\A \in \mathfrak{K}$,
  \[\A \models \varphi \text{ if and only if } \Gamma(\A) \models \varphi'.\]
  Moreover, for a nonzero $\alpha < \omega_1$, if $\varphi'$ is $\Sigma^{p}_{\alpha}$ (or $\Pi^{p}_{\alpha}$), then so is $\varphi$.
\end{corollary}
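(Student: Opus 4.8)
The plan is to mirror the derivation of Corollary~\ref{cor:cantor:pullback} from Theorem~\ref{th:turing:pullback}, replacing the appeal to Proposition~\ref{prop:cantor:turing:relative} by an appeal to Proposition~\ref{prop:scott:enumeration}. First, since $\Gamma\colon\K\embedScott\K'$ is Scott-continuous, Proposition~\ref{prop:scott:enumeration} yields a set $A\in\Pw$ with $\Gamma=\Gamma_A$ as a map on $\Pw$; in particular $\Gamma(\Dp(\A))=\Gamma_A(\Dp(\A))$ for every $\A\in\K$. The only genuine work is then to realize the pair $(\Gamma_A,\varphi')$ as an instance of the effective Pullback Theorem~\ref{th:scott:pullback} relative to a suitably chosen oracle.

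Second, I would choose an oracle $X$ with $X\geq_T A$ and relative to which $\varphi'$ is a positive $X$-computable infinitary sentence lying in $\Sigma^{pc,X}_\alpha$ (respectively $\Pi^{pc,X}_\alpha$), with $\alpha<\omega^{CK,X}_1$. Such an $X$ exists: a single positive infinitary $\Sigma^p_\alpha$ sentence is coded by a countable well-founded tree labeled by atomic formulas, together with the countable index sets $I$ occurring at its nodes, and joining $A$ with a real coding this tree and with a presentation of the countable ordinal $\alpha$ produces an oracle $X$ for which all these index sets become $X$-c.e.\ and $\alpha$ becomes $X$-computable, so $\alpha<\omega^{CK,X}_1$. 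This is exactly the passage from an arbitrary countable object to an $X$-effective one that underlies Corollary~\ref{cor:cantor:pullback}.

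Third, I would apply the relativization of Theorem~\ref{th:scott:pullback} to the oracle $X$. Since $X\geq_T A$, the set $A$ is $X$-c.e., so $\Gamma_A$ is the $X$-relativization $\Gamma^X_e$ of an enumeration operator, i.e.\ $\Gamma^X_e$ is a positive $X$-computable embedding of $\K$ into $\K'$. The $X$-relativized Pullback Theorem then produces a positive $X$-computable infinitary sentence $\varphi\in\Sigma^{pc,X}_\alpha$ (respectively $\Pi^{pc,X}_\alpha$) in the signature of $\K$ such that $\A\models\varphi$ if and only if $\Gamma_A(\A)\models\varphi'$, for all $\A\in\K$. Because $\Gamma_A(\A)=\Gamma(\A)$, and because every $\Sigma^{pc,X}_\alpha$ (respectively $\Pi^{pc,X}_\alpha$) formula is a fortiori a $\Sigma^p_\alpha$ (respectively $\Pi^p_\alpha$) formula once one forgets the $X$-effectiveness of its index sets, this $\varphi$ is the required positive infinitary sentence, and the stated complexity bound is preserved.

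The main point to verify is that the proof of Theorem~\ref{th:scott:pullback} relativizes uniformly in the oracle, i.e.\ that replacing c.e.\ index sets by $X$-c.e.\ ones and $\omega^{CK}_1$ by $\omega^{CK,X}_1$ throughout the construction leaves it intact. This is routine, since the entire enumeration-operator apparatus relativizes to an arbitrary oracle, and it is precisely the content of the remark preceding the statement that Theorem~\ref{th:scott:pullback} can be relativized; everything else is the bookkeeping of coding one formula and one countable ordinal into a single oracle, which presents no obstruction.
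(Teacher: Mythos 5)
Your proposal is correct and follows exactly the route the paper intends: the paper's own ``proof'' is the single remark that Theorem~\ref{th:scott:pullback} relativizes, mirroring how Corollary~\ref{cor:cantor:pullback} is obtained from Theorem~\ref{th:turing:pullback} via the topological characterization of the operators. Your write-up simply fills in the standard details (representing $\Gamma$ as $\Gamma_A$ via Proposition~\ref{prop:scott:enumeration}, coding $A$, the formula $\varphi'$, and a presentation of $\alpha$ into one oracle $X$ so that $\alpha<\omega_1^{CK,X}$, then applying the relativized Pullback Theorem), all of which is sound.
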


\section{Characterization of $\TxtEx$-Learnability}

Recall the class $\tilde\E$ from Remark~\ref{remark:the-key-classes}. In this section, we obtain the following characterization of $\TxtEx$-learning:

\begin{theorem}\label{th:txtex-learning-equivalence}
  For a class $\mathfrak{K} = \{\B_i : i \in \omega\}$, the following are equivalent:
  \begin{enumerate}[(1)]
  \item
    The class $\mathfrak{K}$ is $\TxtEx$-learnable.
  \item
    $\mathfrak{K} \embedScott \tilde\E$.
  \item
    There is a sequence of $\Sigma^{p}_2$ sentences $\{\psi_i : i \in \omega\}$ such that for all $i$ and $j$,
    $\B_j \models \psi_i$ if and only if $i = j$.
  \end{enumerate}
\end{theorem}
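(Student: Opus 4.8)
The plan is to prove the cycle $(2)\Rightarrow(1)$, $(1)\Rightarrow(3)$, $(3)\Rightarrow(2)$, using $\tilde\E$ as a \emph{universal} $\TxtEx$-learnable class. I expect $(2)\Rightarrow(3)$ to drop out cleanly as a cross-check via the non-effective version of the Pullback Theorem~\ref{th:scott:pullback}: the members $[\omega:\omega,n:\omega]$ of $\tilde\E$ are separated by the $\Sigma^{p}_2$ sentences $\psi_n$ asserting ``there is a class of size exactly $n$''; here size $\geq n$ is $\Sigma^p_1$ and size $\leq n$ is $\Pi^p_1$ (crucially, since the signature carries $\neq$, both $x\neq y$ and, as $\neg(x\neq y)$, the equality $x=y$ are available inside $\Pi^p_0$ matrices), so pulling the $\psi_{n_i}$ back along $\Gamma\colon\mathfrak{K}\embedScott\tilde\E$ yields the separating family required in $(3)$.

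For $(2)\Rightarrow(1)$ I would first show directly that $\tilde\E$ is $\TxtEx$-learnable: observing a text, maintain for each observed class the length of its current ``unchanged streak'', and at stage $t$ conjecture the size of the class with the longest streak. Every finite (size-$n$) class eventually stabilizes and so acquires an unbounded streak, whereas every infinite class reveals a new member infinitely often and has its streak reset cofinally; a short bookkeeping argument shows the longest-streak class is eventually always a finite one, so the conjecture converges to $n$. Learnability then transfers down $\embedScott$: since a Scott-continuous $\Gamma$ is a generalized enumeration operator, it turns any enumeration of $\Dp(\A)$ into an enumeration of $\Gamma(\Dp(\A))=\Dp(\Gamma(\A))$, i.e.\ a text for $\Gamma(\A)\in\tilde\E$; feeding this to the $\tilde\E$-learner and decoding the limiting finite class size through the (injective) correspondence $i\mapsto$ size-of-$\Gamma(\B_i)$ gives a learner for $\mathfrak{K}$ (non-effectivity is harmless, as learners here are arbitrary functions).

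For $(1)\Rightarrow(3)$ I would run the standard locking-sequence argument adapted to text. If $M$ learns $\mathfrak{K}$, then for each $i$ there is a finite positive configuration $\sigma$ realizable in $\B_i$ with $M(\sigma)=i$ that \emph{locks}: every realizable positive extension keeps $M$ outputting $i$ (otherwise one builds a text for $\B_i$ on which $M$ fails to converge to $i$). Take $\psi_i=\vvee_{\sigma}\exists\bar a\,(\delta_\sigma(\bar a)\wedge\wwedge_{\tau:\,M(\sigma\tau)\neq i}\forall\bar b\,\neg\theta_\tau(\bar a,\bar b))$, where $\delta_\sigma$ and $\theta_\tau$ are the $\Sigma^p_0$ formulas saying the displayed tuples realize the positive configurations $\sigma$ and $\tau$. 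The matrix is $\Pi^p_1$ and the whole sentence is $\Sigma^{p}_2$. Then $\B_i\models\psi_i$ by the locking sequence, while $\B_j\models\psi_i$ for $j\neq i$ would supply positive data from a copy of $\B_j$ forcing $M$ to converge to $i$ along a text for $\B_j$, contradicting that $M$ learns $\B_j$.

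The main obstacle is $(3)\Rightarrow(2)$: building $\Gamma\colon\mathfrak{K}\embedScott\tilde\E$ from the separating $\Sigma^{p}_2$ family so that $\Gamma(\B_i)\cong[\omega:\omega,(i{+}1):\omega]$. Writing $\psi_i=\vvee_{j}\exists\bar x\,(\xi_{i,j}\wedge\eta_{i,j})$ with $\xi_{i,j}\in\Sigma^p_1$, $\eta_{i,j}\in\Pi^p_1$, the difficulty is that truth of $\psi_i$ is only $\Sigma_2$-approximable from positive data, so false sentences with $i'\neq i$ produce transient witnesses. The key idea is that Scott-continuity (monotonicity) is compatible with \emph{killing a finite class by growing it to an infinite one}, and that $\tilde\E$ supplies an $\omega$-class ``sink'' for exactly this purpose (this is why the target must be $\tilde\E$ and not $\E$). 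Concretely, on disjoint reserved blocks indexed by tuples $(i,j,\bar a,D)$, whenever a finite $D\subseteq\Dp(\A)$ confirms $\xi_{i,j}(\bar a)$ I create a class of size $i{+}1$, and whenever some finite $D'\subseteq\Dp(\A)$ refutes $\eta_{i,j}(\bar a)$ I merge in an infinite tail, growing that class to size $\omega$. Both events are monotone and witnessed by finite subsets, so this defines a genuine enumeration operator (order-independent, compact). For the true $\psi_i$ an unrefuted witness yields infinitely many confirming $D$, hence $\omega$-many surviving classes of size $i{+}1$; every false $\psi_{i'}$ has all its classes grown to $\omega$; adding a fixed background of $\omega$-many $\omega$-classes secures membership in $\tilde\E$. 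Verifying monotonicity and compactness of this operator, and that the surviving finite classes all have the single correct size, is where the real work lies.
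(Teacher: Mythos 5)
Your cycle stands or falls with the direct implication $(1)\Rightarrow(3)$, and that is where there is a genuine gap. The locking-sequence formula $\psi_i=\vvee_{\sigma}\exists\bar a\,(\delta_\sigma(\bar a)\wedge\wwedge_{\tau:\,M(\sigma\tau)\neq i}\forall\bar b\,\neg\theta_\tau(\bar a,\bar b))$ conflates concrete data sequences (on which $M$ is defined) with abstract configurations in variables (which is all an isomorphism-invariant infinitary sentence can speak about). The learner $M$ is an arbitrary function of finite sequences of concrete atomic sentences in the signature $L\cup\omega$, so its value may depend on which natural numbers name the new elements of $\tau$; the index condition ``$M(\sigma\tau)\neq i$'' is therefore not determined by the abstract type $\theta_\tau(\bar x,\bar y)$. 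If you let the conjunction range over all $\tau$ \emph{some} concrete naming of which changes $M$'s mind, then $\B_i\models\psi_i$ can fail: the locking property of $\sigma$ only guarantees that extensions named by the actual elements of the given copy of $\B_i$ keep $M$ at $i$, while the same abstract extension with different names may legitimately cause a transient mind change in a different copy, and $\forall\bar b\,\neg\theta_\tau(\bar a,\bar b)$ then wrongly excludes the good realization as well. If instead you only block those $\tau$ \emph{all} of whose concrete realizations change $M$'s mind, the other verification breaks: from $\B_j\models\psi_i$ you can no longer assemble a single text for one copy of $\B_j$ on which $M$ converges to $i$, because the harmless realizations promised for the various $\tau$ may live in incompatible copies. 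This tension is precisely what the paper's decomposition avoids: it proves $(1)\Rightarrow(2)$ by a construction manipulating only concrete finite enumerations (an enumeration operator into $\tilde\E$ built from the learner), and then obtains $(3)$ from $(2)$ via the Pullback Theorem (Theorem~\ref{th:scott:pullback}), which is the nontrivial syntactic engine that absorbs the naming issues. Without either a repaired $(1)\Rightarrow(3)$ or a proof of $(1)\Rightarrow(2)$, your cycle does not close.

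The remaining pieces are essentially sound but differently distributed from the paper. Your $(2)\Rightarrow(3)$ ``cross-check'' is in fact the paper's own step, using the sentences of Eq.~(\ref{equ:varphi_i}) and the Pullback Theorem. Your $(2)\Rightarrow(1)$ (learn $\tilde\E$ directly by the longest-streak heuristic and pull the learner back through the generalized enumeration operator) and your $(3)\Rightarrow(2)$ (build the operator from the $\Sigma^p_2$ witnesses, killing finite classes by inflating them to $\omega$-classes) are plausible and genuinely different from the paper; but note that the paper gets $(3)\Rightarrow(1)$ essentially for free, by a learner that searches for the least unrefuted witness $\pair{i,\bar a}$ of some $\varphi_i$. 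You have replaced that easy closing implication by the hardest construction in your outline, and you defer its verification to ``where the real work lies.''
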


The proof of Theorem~\ref{th:txtex-learning-equivalence} is given as a sequence of lemmas.

\textbf{(1)}$\Rightarrow$\textbf{(2)}. For a finite sequence $\sigma$ and an $L$-structure $\A$, we say that $\sigma$ \emph{is on} $\D_+(\A)$ if $\sigma$ is an initial segment of some text for the structure $\A$.

\begin{proposition}\label{pr:txt-ex-tell-tale}
  Let $\mathfrak{K}$ be a class of structures which is $\TxtEx$-learnable by a learner $M$, and let $\A$ be a structure in $\mathfrak{K}$.
  For any finite $\sigma$ on $\Dp(\A)$, there exists an extension $\sigma'$ of $\sigma$ on $\Dp(\A)$,
  such that for all $\tau$ on $\Dp(\A)$, extending $\sigma'$, $M(\sigma') = M(\tau)$.
\end{proposition}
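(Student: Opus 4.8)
The plan is to argue by contradiction, manufacturing a text for $\A$ along which $M$ changes its conjecture infinitely often. Assume that no $\sigma'$ as in the statement exists: for every string $\rho$ extending $\sigma$ that is on $\Dp(\A)$, there is some $\tau$ on $\Dp(\A)$ extending $\rho$ with $M(\tau) \neq M(\rho)$. First I record that, since every finite string whose entries all lie in $\Dp(\A)$ is an initial segment of some text for $\A$, a string is \emph{on} $\Dp(\A)$ precisely when all of its entries belong to $\Dp(\A)$; so extending a string on $\Dp(\A)$ by any element of $\Dp(\A)$ again yields a string on $\Dp(\A)$. Fix an enumeration $d_0, d_1, d_2, \dots$ of $\Dp(\A)$ (finite or infinite).

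Next I construct a chain $\sigma = \rho_0 \subseteq \rho_1 \subseteq \cdots$ of strings on $\Dp(\A)$. Given $\rho_n$, apply the hypothesis to obtain an extension $\rho_n^\ast$ of $\rho_n$ on $\Dp(\A)$ with $M(\rho_n^\ast) \neq M(\rho_n)$; note that $\rho_n^\ast$ strictly extends $\rho_n$. Then set $\rho_{n+1} = \rho_n^\ast \cdot d_n$, where $d_n$ is read as $d_0$ in case the enumeration of $\Dp(\A)$ is finite and already exhausted. Appending $d_n$ guarantees that $d_n$ occurs by stage $n+1$, while the first step guarantees a mind change strictly between $\rho_n$ and $\rho_n^\ast$.

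Let $\Text = \bigcup_n \rho_n$. Because each $d_n$ appears, $\Text$ enumerates all of $\Dp(\A)$, and all of its entries lie in $\Dp(\A)$; hence $\Text$ is a genuine (infinite) text for $\A$, and $\A \in \mathrm{LD}(\mathfrak{K})$. On the other hand, for every $n$ we have $\rho_n \subseteq \rho_n^\ast \subseteq \rho_{n+1}$ with $M(\rho_n) \neq M(\rho_n^\ast)$, so $M$ changes its conjecture at least once within each of the infinitely many pairwise-disjoint blocks delimited by $\rho_n$ and $\rho_n^\ast$. Therefore $\lim_k M(\Text \upharpoonright k)$ fails to exist, contradicting the fact that $M$ $\TxtEx$-learns $\mathfrak{K}$. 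This contradiction establishes the proposition.

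I expect the only real subtlety to be ensuring that the construction produces a bona fide text rather than a sequence that merely oscillates: the failure hypothesis alone forces mind changes but says nothing about covering $\Dp(\A)$. Interleaving one element $d_n$ of $\Dp(\A)$ after each forced mind change fixes this, since appending a single fixed atomic fact can never prevent the later appearance of any $d_k$, and it keeps the running string on $\Dp(\A)$ throughout.
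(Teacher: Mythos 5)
Your proposal is correct and follows essentially the same route as the paper, which merely sketches the contrapositive argument ("build an enumeration $\mathbb{T}$ of $\Dp(\A)$ on which $M$ never converges"); you have simply filled in the details of that construction, in particular the interleaving of an enumeration of $\Dp(\A)$ with the forced mind changes so that the resulting sequence is a genuine text. No gaps.
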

\begin{proof}
  Assume that there exists $\sigma$ on $\Dp(\A)$ such that for all $\sigma' \succeq \sigma$, we can find $\tau \succ \sigma'$
  such that $M(\sigma') \neq M(\tau)$.
  In this way it is clear that we can build an enumeration $\mathbb{T}$ of $\Dp(\A)$ such that $\lim_{n\to\infty} M(\mathbb{T}\upharpoonright{n})$ does not exist, and we obtain a contradiction.
  \qed 
\end{proof}

\begin{lemma}\label{lem:txtex-embeds-tilde-eq}
  For any $\TxtEx$-learnable class $\mathfrak{K}$, $\mathfrak{K}~\embedScott~\tilde\E$.
\end{lemma}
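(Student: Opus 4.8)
The plan is to construct the Scott-continuous embedding $\Gamma\colon \mathfrak{K} \embedScott \tilde\E$ directly from the learner $M$, using Proposition~\ref{pr:txt-ex-tell-tale} as the crucial combinatorial input. Recall that $\tilde\E = \{[\omega:\omega, n:\omega] : n\geq 1\}$, so the image structure must encode the learner's eventual conjecture as the size $n$ of the ``small'' finite classes (with infinitely many copies of each relevant size, plus infinitely many infinite classes as padding). The guiding idea is: given a text $\mathbb{T}$ for $\A \cong \B_n$, the learner stabilizes to the index $n$; I want $\Gamma$ to read off the positive diagram being enumerated and produce, in the limit, an equivalence structure whose finite-class size reflects that stabilized conjecture. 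Because $\Gamma$ must be Scott-continuous, by Corollary~\ref{cor:scott-continuous:characterization} it suffices to define it monotonically and compactly --- i.e., decide membership of each atom of $\Gamma(\Dp(\A))$ on the basis of a finite subset of $\Dp(\A)$.

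First I would set up the mechanics of building the output equivalence structure on $\omega$, partitioned into blocks of elements that will become equivalence classes; within each block I only ever \emph{add} equivalence facts (never retract), which is what monotonicity demands. The key device is the tell-tale behavior: for the structure $\A$, Proposition~\ref{pr:txt-ex-tell-tale} gives, for any $\sigma$ on $\Dp(\A)$, a \emph{locking extension} $\sigma'$ on $\Dp(\A)$ on which $M$ has settled to its final value along all continuations within $\Dp(\A)$. I would use this to control the growth of finite classes: as the operator scans larger and larger finite pieces $D$ of its input, whenever it detects (via the learner applied to a canonical text-enumeration of $D$) a conjecture $M = m$ that appears locked, it commits to building a finite class of size $m$ in a fresh block; if the conjecture later changes, monotonicity forbids shrinking a block, so I must instead ``cap off'' the mistaken block by growing it to size $\omega$ (an infinite class, which is harmless padding in $\tilde\E$) and open a new fresh block for the revised guess. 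Since enumeration operators can only add elements and add edges, turning an over-committed finite class into an infinite one is exactly the move available, and this is why the target class $\tilde\E$ (which tolerates infinitely many infinite classes and infinitely many copies of each finite size) is the right codomain rather than $\E$.

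The verification then splits into checking the two clauses of Definition~\ref{def:scott-embedding}. For clause~(1), I argue that for genuine $\A\in\mathfrak{K}$ the limit structure $\Gamma(\A)$ is isomorphic to $[\omega:\omega, n:\omega]$ where $n$ is the eventual conjecture: each block either stabilizes as a finite class of the correct final size $n$ (those opened after the global conjecture has locked) or gets flushed up to $\omega$; infinitely many blocks survive at size $n$ because the learner keeps confirming $n$ cofinally often, and the discarded/padding blocks supply the $[\omega:\omega]$ part. For clause~(2), isomorphism invariance follows because $\A\cong\B$ forces the same eventual conjecture (the learner is required to output the correct isomorphism-type index), hence the same surviving finite-class size, so $\Gamma(\A)\cong\Gamma(\B)$; conversely distinct isomorphism types yield distinct conjectures and thus non-isomorphic images. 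Throughout, Scott-continuity is guaranteed because every atom of the output is placed on the basis of a finite portion of the input and placements are never undone.

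The main obstacle I anticipate is reconciling monotonicity with the learner's \emph{mind-changes}: a Scott-continuous operator cannot erase a finite equivalence class once built, yet the learner may revise its conjecture arbitrarily often before stabilizing. The whole construction hinges on the asymmetry that growing a class (finite $\to$ infinite, or enlarging a finite class) is a monotone operation while shrinking is not, so every provisional guess must be recorded in a disposable block that can be safely inflated to $\omega$ if it turns out wrong. Getting the bookkeeping right --- ensuring that exactly the correct finite size survives in infinitely many blocks while all erroneous blocks are absorbed into the infinite-class padding, and that this holds uniformly across all texts for $\A$ (here the locking extensions from Proposition~\ref{pr:txt-ex-tell-tale} are essential, since the operator must behave correctly on \emph{every} input enumerating $\Dp(\A)$, not just a single favorable one) --- is the delicate part of the argument.
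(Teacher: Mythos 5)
Your proposal follows essentially the same route as the paper: blocks indexed by the learner's provisional conjectures, each opened at the finite size dictated by the conjecture and monotonically inflated to an infinite class upon a mind-change, with Proposition~\ref{pr:txt-ex-tell-tale} guaranteeing that infinitely many blocks of the correct size survive and that every erroneous block is absorbed into the $[\omega:\omega]$ padding. The one detail to adjust is that the blocks should be indexed by \emph{all} finite enumerations $\sigma$ of parts of the input (as the paper does via the sets $E_D$), not by a single canonical enumeration of each finite set, since the locking extensions supplied by Proposition~\ref{pr:txt-ex-tell-tale} are locking with respect to sequence extension rather than set inclusion.
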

\begin{proof}
  For any finite enumeration $\sigma\in\omega^{<\omega}$ and ordinal $\alpha \leq \omega$, we define the auxiliary equivalence structure
  $\A_{\sigma,\alpha}$ with domain consisting of the elements $\{x_{\sigma,k} : k < \alpha\} \cup \{y_{\sigma,k} : k < \omega\}$
  and $\Dp(\A_{\sigma,\alpha})$ saying that the elements $x_{\sigma,k}$ form an equivalence class of size at least $\alpha$ and the elements $y_{\sigma,k}$ form an equivalence class of size $\omega$.
  Notice that $k \leq m$ implies $\Dp(\A_{\sigma,k}) \subseteq \Dp(\A_{\sigma,m})$.

  Suppose $M$ is a learner for the class $\mathfrak{K}$. We describe how the
  desired mapping $\Gamma$ works.
  Given a finite set $D$, consider all finite enumerations $\sigma$ of parts of $D$.
  First, we make sure that if $M(\sigma) = i$, then $\Gamma(D)$ contains the positive diagram of $\A_{\sigma,i+1}$.
  Second, if for some initial segment $\tau$ of $\sigma$, $M(\tau) \neq M(\sigma)$, then $\Gamma(D)$ contains the positive diagram of $\A_{\tau,\omega}$.
  More formally, let $E_D$ be the set of all $\sigma$ enumerating parts of $D$. Then $\Gamma(D)$ is the least set obeying the rules:
  \begin{enumerate}[(1)]
  \item
    $\bigcup_{\sigma\in E_D}\{\Dp(\A_{\sigma,i+1}) : M(\sigma) = i\} \subseteq \Gamma(D)$;
  \item
    $\bigcup_{\sigma\in E_D}\{\Dp(\A_{\tau,\omega}) : \tau \prec \sigma\ \&\ M(\tau) \neq M(\sigma)\} \subseteq \Gamma(D)$.
  \end{enumerate}
  It is easy to see that $\Gamma$ is motonone and compact, and hence Scott-continuous by Corollary~\ref{cor:scott-continuous:characterization}.
  
  Let $\B_i \in \mathfrak{K}$. We will show that $\Gamma(\B_i)$ is an equivalence structure of type $[\omega:\omega, i+1:\omega]$.
  By Proposition~\ref{pr:txt-ex-tell-tale}, there are infinitely many $\sigma$ on $\Dp(\B_i)$ such that $M(\sigma) = i$ and for all $\tau \succ \sigma$, $M(\tau) = i$.
  By the construction of $\Gamma$, $\Gamma(\B_i)$ contains infinitely many equivalence classes of size $i+1$.
  Assume that $\Gamma(\B_i)$ contains an equivalence class of a finite size $j+1 \neq i+1$. This can happen if there is a finite $\rho$ on $D_+(\B_i)$
  such that $M(\rho) = j$ and the equivalence structure $\A_{\rho,j+1}$ is a part of $\Gamma(\B_i)$. Again by Proposition~\ref{pr:txt-ex-tell-tale}, there exists an extension $\rho'$ of $\rho$
  such that $M(\rho') = M(\tau)$ for all $\tau$ extending $\rho'$. Since $M$ learns $\B_i$, we have $M(\rho') = i$. It follows by the construction of $\Gamma$ that the equivalence structure $\A_{\rho,j+1}$ is extended to $\A_{\rho,\omega}$ in $\Gamma(\B)$.
  \qed 
\end{proof}

\textbf{(2)}$\Rightarrow$\textbf{(3)}. The next proposition shows the usefulness of Theorem~\ref{th:scott:pullback} in giving a syntactic characterization of $\TxtEx$-learnable classes.

\begin{lemma}
  Let $\mathfrak{K} = \{\B_i : i < \omega\}$ be a class such that $\Gamma\colon \mathfrak{K} \embedScott \tilde\E$. 
  Then there exist $\Sigma^{p}_2$ sentences $\varphi_i$ such that
  $\B_i \models \varphi_j$ if and only if $i = j$.
\end{lemma}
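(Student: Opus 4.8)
The plan is to push the problem forward onto the target class $\tilde\E$ and then pull formulas back along $\Gamma$. Concretely, I would first produce, for each $n\geq 1$, a $\Sigma^{p}_2$ sentence $\theta_n$ in the signature of equivalence structures that isolates the member $\C_n := [\omega:\omega,\,n:\omega]$ of $\tilde\E$ among all members of $\tilde\E$, that is, $\C_m \models \theta_n$ iff $m=n$. Granting this, the non-effective positive Pullback Theorem (the corollary to Theorem~\ref{th:scott:pullback}) yields, for each $n$, a $\Sigma^{p}_2$ sentence $\varphi^{(n)}$ in the signature of $\mathfrak{K}$ with $\A \models \varphi^{(n)} \iff \Gamma(\A)\models\theta_n$ for all $\A\in\mathfrak{K}$. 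Since each $\Gamma(\B_i)$ lies in $\tilde\E$, write $\Gamma(\B_i)\cong\C_{n_i}$; because $\Gamma$ preserves non-isomorphism and the $\B_i$ are pairwise non-isomorphic (as $\mathfrak{K}$ lists $\omega$ distinct isomorphism types), the map $i\mapsto n_i$ is injective. Setting $\varphi_i := \varphi^{(n_i)}$ then gives $\B_j \models \varphi_i \iff \C_{n_j}\models\theta_{n_i}\iff n_j=n_i\iff i=j$, which is exactly what is demanded.

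The heart of the argument is the construction of $\theta_n$, and this is where I expect the only genuine obstacle to lie. The natural candidate is \emph{``there is an equivalence class of size exactly $n$''}: in $\C_m$ the finite classes all have size $m$ while the remaining classes are genuinely infinite, so such a sentence holds iff $m=n$. The ``size at least $n$'' part is harmlessly positive existential, asserting $n$ distinct pairwise $\sim$-related elements. The difficulty is the upper bound: to say the class of $x_1$ has no further elements one wants $\forall y\,(y\sim x_1 \to \bigvee_{i\leq n} y=x_i)$, and naively this requires a negation of $\sim$ together with the equalities $y=x_i$, none of which is a positive atomic fact visible on a text.

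The observation that resolves this is that the signature carries both $=$ and $\neq$ as primitive symbols, so $y=x_i$ is semantically equivalent to $\neg(y\neq x_i)$, a negation of an atomic formula. Hence the matrix $\neg(y\sim x_1)\vee\bigvee_{i\leq n}\neg(y\neq x_i)$ is a finite disjunction of negated atomic formulas, i.e.\ a $\Pi^{p}_0$ formula, and prefixing $\forall y$ makes it $\Pi^{p}_1$. Writing
\[
\theta_n = \exists x_1\dots x_n\Big(\bigwedge_{i<j}(x_i\neq x_j \wedge x_i\sim x_j)\;\wedge\;\forall y\big(\neg(y\sim x_1)\vee\bigvee_{i\leq n}\neg(y\neq x_i)\big)\Big),
\]
the first conjunct is $\Sigma^{p}_0$ (hence $\Sigma^{p}_1$) and the second is $\Pi^{p}_1$, so $\theta_n$ has the form $\exists\bar{x}\,(\xi\wedge\psi)$ with $\xi\in\Sigma^{p}_1$ and $\psi\in\Pi^{p}_1$, placing it in $\Sigma^{p}_2$. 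It then remains only to check the semantics, namely that $\theta_n$ asserts precisely the existence of an equivalence class with exactly $n$ elements and therefore holds in $\C_m$ iff $m=n$; this verification, like the monotonicity bookkeeping, is routine. I expect essentially all of the lemma's content to be concentrated in spotting the $=$/$\neq$ rewriting, after which the conclusion follows from the cited pullback corollary and the injectivity of $i\mapsto n_i$.
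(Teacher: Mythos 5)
Your proposal is correct and follows essentially the same route as the paper: the paper writes down the same $\Sigma^p_2$ sentence isolating $[\omega:\omega,\,n:\omega]$ inside $\tilde\E$ (using exactly the observation that $\neq$ is a primitive symbol, so $\neg(y\neq x_i)$ is a negated atomic formula and the matrix is $\Pi^p_0$) and then pulls it back along $\Gamma$ via the positive Pullback Theorem. Your handling of the re-indexing via the injective map $i\mapsto n_i$ is slightly more explicit than the paper's ``without loss of generality'' normalization, but the substance is identical.
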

\begin{proof}
  Without loss of generality, we may assume that $\Gamma(\B_i)$ is an equivalence structure $\A_i$ of type $[\omega : \omega, i+1 : \omega]$. For $\mathcal{A}_i$, 
  we have the infinitary $\Sigma^p_2$ sentence
  \begin{equation}\label{equ:varphi_i}
  	\varphi_i \df \exists x_0\cdots x_i\bigg[\bigwedge_{k\neq\ell\leq i} (x_{k}
      \sim x_{\ell}\mathrel{\&} x_{k} \neq x_{\ell})\mathrel{\&} \forall y(\neg y \sim x_0 \lor \bigvee_{\ell \leq i} \neg y \neq x_\ell)\bigg].
	\end{equation}
  Notice that we assume that $\neq$ is in our signature, so $x_k \neq x_{\ell}$ is a positive atomic formula.
  By Theorem~\ref{th:scott:pullback}, we obtain $\Sigma^{p}_2$ sentences for the structures $\B_i$ in $\mathfrak{K}$. 
  \qed
\end{proof}

\textbf{(3)}$\Rightarrow$\textbf{(1)}. We give the final part of the proof:

\begin{lemma}\label{lemma:3-to-1}
  Let $\mathfrak{K} = \{\B_i : i < \omega\}$.
  Suppose that there exist $\Sigma^{p}_2$ sentences $\varphi_i$ such that 
  $\B_i \models \varphi_j$ if and only if $i = j$.
  Then $\mathfrak{K}$ is $\TxtEx$-learnable.
\end{lemma}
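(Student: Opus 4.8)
The plan is to construct a learner $M$ for $\mathfrak{K}$ directly from the given $\Sigma^p_2$ sentences $\varphi_i$. The key structural fact I would exploit is the syntactic shape of a $\Sigma^p_2$ sentence: by Definition~\ref{def:posinf}, each $\varphi_i$ is a countable disjunction $\vvee_{n} \exists \bar{x}_n(\xi_n \land \psi_n)$ where $\xi_n$ is $\Sigma^p_1$ and $\psi_n$ is $\Pi^p_1$. The crucial asymmetry for learning from text is that the $\Sigma^p_1$ (equivalently, positive existential) part of such a sentence is \emph{positively verifiable} from a text: since a text enumerates exactly $\Dp(\A)$, any true positive existential fact about $\A$ will eventually be witnessed by finitely much of the text. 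The $\Pi^p_1$ part, by contrast, is only refutable in the limit, but this is precisely what the limit behavior of the learner is allowed to capture.

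First I would make precise the sense in which $\varphi_i$ is ``monitored'' along a text. Given a text $\Text$ for an input structure $\mathcal{S} \cong \B_j$, at each stage $s$ the finite initial segment $\Text\upharpoonright s$ determines a finite positive atomic diagram, hence a finite partial structure. I would define $M(\Text \upharpoonright s)$ by searching, in a fixed computable priority order over the indices $i$ and the disjuncts/witness-tuples of $\varphi_i$, for the least index $i$ such that the $\Sigma^p_2$ sentence $\varphi_i$ appears ``currently satisfiable'' given the evidence so far — concretely, there is some disjunct $\exists\bar{x}_n(\xi_n \land \psi_n)$ and a tuple from the current domain witnessing the $\Sigma^p_1$ part $\xi_n$, while no positive atomic fact enumerated so far has yet refuted the $\Pi^p_1$ part $\psi_n$. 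The learner outputs that least $i$, and $?$ if no index qualifies.

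The main work is proving convergence to the correct index $j$. For the correct index $j$, since $\B_j \models \varphi_j$, there is a true disjunct of $\varphi_j$ whose $\Sigma^p_1$ part is eventually positively witnessed by the text, and whose $\Pi^p_1$ part, being true in $\B_j$, is never refuted; hence from some stage on, index $j$ remains a permanent candidate. For each incorrect index $i \neq j$, the hypothesis $\B_j \not\models \varphi_i$ must eventually manifest. This is the step I expect to be the main obstacle: because a $\Pi^p_1$ conjunct can fail in $\B_j$ for reasons that are only enumerated positively (a $\Pi^p_0$ disjunct of the form $\neg\theta$ being false means $\theta$ is \emph{true}, which the text does confirm), I would need to verify carefully that the falsity of $\varphi_i$ in $\B_j$ is always \emph{positively detectable} in the limit — that every currently-satisfiable-looking disjunct of $\varphi_i$ eventually gets refuted by positive evidence. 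This relies on the positivity of the formula hierarchy ensuring that failure of $\Pi^p_1$ conjuncts surfaces as true positive atomic facts in $\Dp(\B_j)$.

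With detectability in hand, a standard priority/least-index argument closes the proof: once every $i < j$ with $\B_j \not\models \varphi_i$ has been permanently refuted and index $j$ has become a permanent candidate, the learner stabilizes on $j$. Indices $i > j$ are irrelevant since $j$ is chosen as the least qualifying candidate. I would also note that correctness only requires the semantic separation property $\B_i \models \varphi_j \iff i = j$, so the argument is uniform in $\mathfrak{K}$ and does not depend on effectiveness of the $\varphi_i$ beyond their being genuine $\Sigma^p_2$ sentences; the learner itself need not be computable, matching the non-effective framework used throughout.
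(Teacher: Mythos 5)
Your overall strategy is the paper's: positively verify the $\Sigma^p_1$ part of each $\varphi_i$ from the text, and wait for the $\Pi^p_1$ part to be refuted; your key observation --- that the failure of a $\Pi^p_1$ conjunct in $\B_j$ is witnessed by finitely many \emph{true} positive atomic facts, all of which eventually appear in any text --- is exactly the right one, and the remark that no effectiveness is needed is also correct. The gap is in the definition of the learner and the ensuing convergence claim. You let $M(\sigma)$ be the least \emph{index} $i$ admitting \emph{some} currently qualified candidate (a disjunct together with a witness tuple), and you then assert that every incorrect $i<j$ is eventually ``permanently refuted.'' Your detectability argument only shows that each \emph{individual} candidate for an incorrect $\varphi_i$ is eventually permanently refuted; since $\varphi_i$ may have infinitely many candidates, their qualification windows can overlap so as to cover every stage, and then index $i$ is never without a qualified candidate. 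Schematically: if $\varphi_0=\vvee_n\exists x\,(\alpha_n(x)\wedge\forall y\,\neg\beta_n(x,y))$ and $\B_1$ contains for every $n$ an element $a_n$ with $\B_1\models\alpha_n(a_n)$ and some $b_n$ with $\B_1\models\beta_n(a_n,b_n)$ (so $\B_1\not\models\varphi_0$), a text for $\B_1$ may enumerate $\alpha_0(a_0)$, then $\beta_0(a_0,b_0)$ together with $\alpha_1(a_1)$, then $\beta_1(a_1,b_1)$ together with $\alpha_2(a_2)$, and so on; your learner then outputs $0$ at every stage. Nothing in your argument excludes this.

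The repair is what the paper actually does: fix one well-ordering of order type $\omega$ of all candidates $\pair{i,n,\bar a}$ (index, disjunct, witness tuple), and let $M(\sigma)$ output the index component of the \emph{least currently qualified candidate}. The correct index $j$ has a candidate that becomes qualified once its $\Sigma^p_1$ witness has appeared and is never disqualified (its $\Pi^p_1$ part is true in $\B_j$, so no refuting facts lie in $\Dp(\B_j)$). Only finitely many candidates precede it, and each of those is either never qualified or, by your detectability argument, permanently disqualified from some finite stage on; hence $M$ stabilizes on $j$. With this change your proof coincides with the one in the paper.
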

\begin{proof}
  Without loss of generality, suppose that the $\Sigma^p_2$ sentence $\varphi_i$ has the form
  \[\varphi_i = \exists \bar{x}_i \bigg( \alpha_i(\bar{x}_i) \land \bigwedge_{j\in J_i}\forall\bar{y}_j \neg(\beta_{i,j}(\bar{x}_i,\bar{y}_j)) \bigg),\]
  where $\alpha_i$ and $\beta_{i,j}$ are positive atomic formulas, and $J_i$ is a countable set.
  We will describe how the learner $M$ for the class $\mathfrak{K}$ works.
  
  Consider an arbitrary sequence $\sigma$ of positive atomic formulas. We must determine the value of $M(\sigma)$.
  We find the least $\pair{i,\bar{a}}$ such that the G\"odel code of 
  $\alpha_i(\bar{a})$ is in the range of $\sigma$ and no sentence of the form $\beta_{i,j}(\bar{a},\bar{b}_j)$ is in the range of $\sigma$.
  Then we let $M(\sigma) = i$.

  Suppose $\A \cong \B_i$ and consider some text $\mathbb{T}_\A$ for $\mathcal{A}$.
  Since $\A \models \varphi_i$, find the least tuple $\bar{a}$ such that $\A \models \alpha_i(\bar{a})$ and
  $\A \models \bigwedge_{j\in J_i}\forall\bar{y}_j \neg(\beta_{i,j}(\bar{a},\bar{y}_j))$.
  It follows that the code of $\alpha_i(\bar{a})$ will appear in some initial segment of $\mathbb{T}_\A$
  and none of the positive atomic sentences $\beta_{i,j}(\bar{a},\bar{b}_j)$ will appear in $\mathbb{T}_\A$ for any $\bar{b}_j$ and any $j \in J_i$.
  It follows that $\lim_{n\to\infty} M(\mathbb{T}_\A \upharpoonright {n}) = i$.
  
  Lemma~\ref{lemma:3-to-1} and Theorem~\ref{th:txtex-learning-equivalence} are proved.
  \qed
\end{proof}


The choice of the class $\tilde\E$ in Theorem~\ref{th:txtex-learning-equivalence} seems somewhat arbitrary. The statement of Theorem~\ref{th:txtex-learning-equivalence} suggests the following definition.

\begin{definition}
  A countably infinite class $\mathfrak{K}_0$ is \emph{$\TxtEx$-complete} if 
  \begin{itemize}
  \item
    $\mathfrak{K}_0$ is $\TxtEx$-learnable, and
  \item
    for any countable $\TxtEx$-learnable class $\mathfrak{K}$, $\mathfrak{K} \embedScott \mathfrak{K}_0$.
  \end{itemize}
\end{definition}

\begin{corollary}\label{cor:tEcomplete}
  The class $\tilde \E$ is $\TxtEx$-complete.
\end{corollary}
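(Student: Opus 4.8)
The plan is to verify the two defining clauses of $\TxtEx$-completeness for $\tilde\E$, and the key observation is that both are essentially immediate consequences of Theorem~\ref{th:txtex-learning-equivalence}. First I would establish that $\tilde\E$ is itself $\TxtEx$-learnable; second, I would note that the required embeddings of arbitrary countable $\TxtEx$-learnable classes into $\tilde\E$ are furnished directly by Lemma~\ref{lem:txtex-embeds-tilde-eq}.

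For the first clause, the point is that the members of $\tilde\E$ are precisely the equivalence structures of type $[\omega:\omega, i+1:\omega]$ for $i \in \omega$, which are exactly the structures $\A_i$ appearing in the proof of the implication (2)$\Rightarrow$(3). These are isolated from one another by the $\Sigma^p_2$ sentences $\varphi_i$ of equation~\eqref{equ:varphi_i}: the sentence $\varphi_i$ asserts the existence of an equivalence class of size exactly $i+1$, so a structure $[\omega:\omega, n:\omega]$ satisfies $\varphi_i$ if and only if $n = i+1$. Indexing $\tilde\E$ as $\{\A_i : i \in \omega\}$ with $\A_i$ of type $[\omega:\omega, i+1:\omega]$, we thus obtain $\A_i \models \varphi_j$ if and only if $i = j$, which is exactly condition~(3) of Theorem~\ref{th:txtex-learning-equivalence}. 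The implication (3)$\Rightarrow$(1) then yields that $\tilde\E$ is $\TxtEx$-learnable.

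For the second clause, I would simply invoke Lemma~\ref{lem:txtex-embeds-tilde-eq}: for any countable $\TxtEx$-learnable class $\mathfrak{K}$ we have $\mathfrak{K} \embedScott \tilde\E$. Combined with the first clause, this establishes that $\tilde\E$ is $\TxtEx$-complete.

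Since both ingredients have already been proved, I do not anticipate any substantive obstacle. The only step requiring genuine attention is the semantic check that each $\varphi_i$ really does express ``having an equivalence class of size exactly $i+1$'' and that this property separates the members of $\tilde\E$ pairwise; but this is a routine reading of formula~\eqref{equ:varphi_i}, already implicit in the proof of (2)$\Rightarrow$(3).
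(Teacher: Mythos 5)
Your proof is correct and follows exactly the route the paper intends: the embedding clause is Lemma~\ref{lem:txtex-embeds-tilde-eq} verbatim, and the learnability of $\tilde\E$ itself comes from the separating $\Sigma^p_2$ sentences $\varphi_i$ of Eq.~\eqref{equ:varphi_i} together with the implication (3)$\Rightarrow$(1) of Theorem~\ref{th:txtex-learning-equivalence}. The paper leaves this as an immediate corollary, and your write-up supplies precisely the two checks it relies on.
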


\section{$\TxtEx$-Complete Classes}

Consider a signature $L_{\text{st}} = \{<,=,\neq\} \cup \{P_i : i \in \omega\}$,
where all $P_i$ are unary.
For each $i \in \omega$, we define a structure $\A_i$, where all $P^{\A_i}_j$ are disjoint infinite sets.
In addition, for any two elements $x,y$, where $x \in P_j$ and $y \in P_k$ for $j\neq k$, $x$ and $y$ are incomparable under $<$.
Let $\eta$ denote the order type of the rationals, and, if $\A_{i,j}$ is the
restriction of $\A_i$ to the elements in $P_j$, then define
\[\A_{i,j} \cong
  \begin{cases}
    \eta, & \text{if } i \neq j\\
    1+\eta, & \text{if }i = j.
  \end{cases}
\]

Let us denote $\mathfrak{K}_{\text{st}} = \{\A_i : i \in \omega\}$.
This class is studied in \cite{bazhenov_learning_2020}, and by combining \cite{bazhenov_learning_2020} with Corollary~\ref{cor:cantor:pullback}, the following characterization of $\InfEx$-learnability is obtained.

\begin{theorem}[\cite{bazhenov_learning_2020}]
  For a class $\mathfrak{K} = \{\B_i : i \in \omega\}$, the following are equivalent:
  \begin{enumerate}[(1)]
  \item
    The class $\mathfrak{K}$ is $\InfEx$-learnable.
  \item
    $\mathfrak{K} \embedCantor \mathfrak{K}_{\text{st}}$.
  \item
    There is a sequence of $\Sigma^{\text{inf}}_2$ sentences $\{\psi_i : i \in \omega\}$ such that for all $i$ and $j$,
    $\B_j \models \psi_i$ if and only if $i = j$.
  \end{enumerate}
\end{theorem}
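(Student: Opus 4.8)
The plan is to prove the cycle $(1)\Rightarrow(2)\Rightarrow(3)\Rightarrow(1)$, paralleling the proof of Theorem~\ref{th:txtex-learning-equivalence} step for step but replacing the Scott-continuous machinery by its Cantor-continuous counterpart: Proposition~\ref{prop:cantor:turing:relative}, Corollary~\ref{cor:cantor:pullback}, and the target class $\mathfrak{K}_{\text{st}}$ in place of $\tilde\E$.

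For $(1)\Rightarrow(2)$ I would mimic Lemma~\ref{lem:txtex-embeds-tilde-eq}. Fix an $\InfEx$-learner $M$ for $\mathfrak{K}$ and work relative to $M$ as an oracle, so that by Proposition~\ref{prop:cantor:turing:relative} it suffices to describe a Turing operator from $\D(\mathcal{S})$ to $\D(\Psi(\mathcal{S}))$. Reading longer and longer initial segments of the canonical informant of $\D(\mathcal{S})$, I compute the conjecture sequence $i_n = M(\D(\mathcal{S})\restriction n)$ and build $\Psi(\mathcal{S})$ inside $\mathfrak{K}_{\text{st}}$ so that the block $P_j$ receives order type $1+\eta$ exactly when $j$ is the final (stable) conjecture and $\eta$ otherwise. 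Concretely, every block is continually filled to a dense linear order; while $M$ currently conjectures $j$ a guarded bottom element is maintained in $P_j$, and whenever $M$ moves off $j$ a new element is inserted strictly below that bottom. If $M$ stabilizes at $i$, then $P_i$ keeps a least element (type $1+\eta$) while every $P_j$ with $j\neq i$ has its bottom pushed down infinitely often (type $\eta$); hence $\Psi(\mathcal{S})\cong\A_i$. Since $M$ converges correctly on every informant of a copy $\mathcal S\cong\B_i$, we obtain $\Psi(\mathcal S)\cong\A_i$, and therefore $\mathcal A\cong\mathcal B\iff\Psi(\mathcal A)\cong\Psi(\mathcal B)$, verifying Definition~\ref{def:cantor-embedding}.

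For $(2)\Rightarrow(3)$ the work is light. Within $\mathfrak{K}_{\text{st}}$ the single sentence
\[
  \psi_i \;\df\; \exists x\,\big(P_i(x)\wedge \forall y\,(P_i(y)\to x\le y)\big)
\]
asserts that the block $P_i$ has a least element, and since $\A_{j,i}\cong 1+\eta$ iff $j=i$ we get $\A_j\models\psi_i\iff i=j$. Each $\psi_i$ is finitary, hence $\Sigma^{\text{inf}}_2$, so applying the non-effective Pullback Theorem (Corollary~\ref{cor:cantor:pullback}) to the embedding $\Psi$ yields $\Sigma^{\text{inf}}_2$ sentences $\varphi_i$ in the signature of $\mathfrak{K}$ with $\B_j\models\varphi_i\iff\Psi(\B_j)\models\psi_i\iff\A_j\models\psi_i\iff i=j$, which is $(3)$.

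Finally $(3)\Rightarrow(1)$ parallels Lemma~\ref{lemma:3-to-1}, the essential new feature being that an informant (unlike a text) determines the truth value of every quantifier-free formula on every tuple, so a $\Pi^{\text{inf}}_1$ matrix $\pi_{i,k}(\bar a)$ becomes a co-c.e.\ event in the informant, confirmed by the permanent absence of a counterexample tuple. Writing $\varphi_i=\vvee_k\exists\bar x\,\pi_{i,k}(\bar x)$, the learner searches in the limit for the least index $i$ admitting a disjunct and a witness whose $\Pi_1$ matrix survives all counterexample checks seen so far, and outputs that $i$; this is exactly the direction established in~\cite{bazhenov_learning_2020}. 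I expect this to be the main obstacle: for a false $\Sigma^{\text{inf}}_2$ sentence $\varphi_j$ every (disjunct, witness) pair is eventually refuted, yet infinitely many such pairs exist, so naive least-index search need not stabilize; one must organise the witnesses so that each lower false index is abandoned permanently, securing $\lim_n M(\mathbb{I}\restriction n)=i^*$ for the true index $i^*$. Closing the cycle proves the theorem.
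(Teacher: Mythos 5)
Your proposal is correct and follows essentially the intended route: the paper gives no in-line proof of this theorem but obtains it by citing \cite{bazhenov_learning_2020} together with Corollary~\ref{cor:cantor:pullback}, and your argument reconstructs exactly that proof by transposing the $\TxtEx$ argument (learner-to-operator construction targeting $\mathfrak{K}_{\text{st}}$ with $1+\eta$ versus $\eta$ blocks, pullback of the finitary $\Sigma_2$ sentence ``$P_i$ has a least element,'' and limit search in which refuted $(i,k,\bar a)$-triples are discarded permanently, which works because a false $\Pi^{\textnormal{inf}}_1$ instance is refuted by a ground counterexample visible in a finite informant segment and refutations are monotone). One spot to phrase more carefully in $(1)\Rightarrow(2)$: inserting below the bottom of $P_j$ only at the \emph{event} of a mind change off $j$ would leave a least element in every block that $M$ conjectures once and then abandons forever, so the construction must insert a new element below the bottom of every block other than the currently conjectured one at every stage (equivalently, default filling tends toward $\eta$ and only the current conjecture's bottom is guarded) --- which is what your claim that each $P_j$ with $j \neq i$ is ``pushed down infinitely often'' tacitly requires --- and in $(2)\Rightarrow(3)$ one should add the harmless reindexing assumption $\Psi(\B_j)\cong\A_j$, just as the paper does in the analogous $\TxtEx$ lemma.
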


This result suggests the following definition.

\begin{definition}
  We say that a class $\mathfrak{K}_0$ is $\InfEx$-complete if 
  \begin{itemize}
  \item
    $\mathfrak{K}_0$ is $\InfEx$-learnable;
  \item
    for any $\InfEx$-learnable class $\mathfrak{K}$, $\mathfrak{K} \embedCantor \mathfrak{K}_0$.
  \end{itemize}
\end{definition}

It follows that the class $\mathfrak{K}_{\text{st}}$ is $\InfEx$-complete. Now
we will show that $\mathfrak{K}_{\text{st}}$ is also $\TxtEx$-complete (see
Proposition~\ref{prop:Complete} below).
To do this, we borrow some ideas from \cite{bazhenov_computable_2021} to get a series of ancillary facts.

\begin{lemma}
  $\tilde\E \leq_{pc} \mathfrak{K}_{\text{st}}$.
\end{lemma}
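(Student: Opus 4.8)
The plan is to build an enumeration operator $\Gamma$ directly, sending every copy of $[\omega:\omega,n:\omega]\in\tilde\E$ to a copy of $\A_{n-1}\in\mathfrak{K}_{\text{st}}$. Since $n\mapsto n-1$ is injective and two members of $\tilde\E$ are isomorphic iff they share the same finite class size (and two members of $\mathfrak{K}_{\text{st}}$ are isomorphic iff they carry the least element in the same block), condition (2) of Definition~\ref{def:scott-embedding} then follows at once, and the whole task reduces to realising the intended target order types. The guiding observation is that both classes are characterised by $\Sigma^p_2$ sentences of exactly the same shape: a member of $\tilde\E$ has finite class size $n$ iff it satisfies $\varphi_{n-1}$ of~\eqref{equ:varphi_i}, i.e.\ ``there are $n$ distinct pairwise $\sim$-equivalent elements whose $\sim$-class contains nothing else'', while $\A_i$ is the unique member of $\mathfrak{K}_{\text{st}}$ satisfying ``$P_i$ has a least element'', i.e.\ $\exists x(P_i(x)\wedge\forall y(\neg P_i(y)\lor\neg(y<x)))$. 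In both cases the inner universal part is $\Pi^p_1$ and hence positively refutable: a candidate complete class of size $k+1$ is refuted by the appearance of a new class member, and a candidate least point $b$ of $P_k$ is refuted by the appearance of a point strictly below $b$. The operator translates refutations of the first kind into refutations of the second.

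Concretely, I assemble the target block by block, using $P_k$ to test for a class of size exactly $k+1$. For each $k$ I maintain a queue, in order of appearance, of the tuples $\bar x=(x_0,\dots,x_k)$ of pairwise distinct, pairwise $\sim$-equivalent elements seen in the input. At each stage there is an \emph{active} tuple, namely the first tuple in the queue whose class has not yet been seen to exceed $\{x_0,\dots,x_k\}$. When a tuple becomes active I materialise a fresh point $b$ of $P_k$, placed strictly below every point of $P_k$ enumerated so far, and I keep extending a dense order without greatest element above $b$. If the active tuple is later refuted (a new member of its class appears), I enumerate a dense order strictly below $b$ and hand activity to the next tuple in the queue, again installing its point below everything present. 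I never enumerate any $<$-fact between points of distinct blocks, so different blocks stay incomparable, and a standard dovetailing guarantees that each $P_k$ is densely ordered in the limit.

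The verification splits on whether $k+1=n$. If $k+1\neq n$, every tuple of $k+1$ pairwise equivalent elements lies in a class of size $n$ or $\omega$, hence of size $>k+1$, so every tuple is eventually refuted; activity passes on infinitely often, the installed minima form a descending chain of type $\omega^\ast$ with dense pieces between and below them, and $P_k$ has order type $\eta$. If $k+1=n$, the size-$n$ classes yield tuples that are never refuted; activity stalls at the first such tuple (only finitely many earlier tuples precede it, each refuted in finitely many steps), so from some stage on nothing is added below its point and $P_k$ has order type $1+\eta$. Hence $\Gamma([\omega:\omega,n:\omega])\cong\A_{n-1}$ for every copy of the input, independently of the enumeration. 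Finally, every output fact is triggered by a finite fragment of the positive input, so $\Gamma$ is monotone and compact, hence Scott-continuous by Corollary~\ref{cor:scott-continuous:characterization}, and it is visibly c.e., so it is a positive computable embedding.

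The main obstacle, and the real content, is the block construction in the case $k+1=n$: there are $\omega$-many size-$n$ classes, hence $\omega$-many never-refuted candidates all ``claiming'' to be least, yet a linear order admits at most one minimum. The promotion-with-demotion mechanism resolves this, since adding a whole copy of $\eta$ below a demoted candidate locally turns $1+\eta$ into $\eta+1+\eta\cong\eta$ and thereby absorbs refuted candidates harmlessly, while the first never-refuted candidate survives as a genuine global minimum. Arranging the bookkeeping so that the limit order types come out exactly $\eta$ versus $1+\eta$, uniformly over all input enumerations, is where the care is needed.
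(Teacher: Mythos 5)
Your high-level idea is the right one, and it is in fact the same idea as the paper's: use $P_k$ to detect a class of size exactly $k+1$ via a ``never-refuted'' existential witness, and absorb refuted candidates by inserting a copy of $\eta$ below them so that only a surviving candidate becomes a genuine least element. The gap is in the implementation: as described, $\Gamma$ is not a well-defined generalized enumeration operator. Your construction processes the input as a \emph{sequence} of stages, with a queue ordered ``by order of appearance'' and each newly activated point placed ``below every point enumerated so far.'' A Scott-continuous map is a function $\Pw\to\Pw$; its value must depend only on the input \emph{set} $\Dp(\A)$, not on any enumeration of it. If two candidate tuples $\bar x$ and $\bar y$ appear in opposite orders in two enumerations of the same positive diagram, your procedure enumerates $b_{\bar y}<b_{\bar x}$ in one run and $b_{\bar x}<b_{\bar y}$ in the other, so either the operator is not well defined, or (if one takes the union over all finite enumerations, as one must to get monotonicity and compactness in the sense of Corollary~\ref{cor:scott-continuous:characterization}) the output contains contradictory order facts and is no longer a linear order on each $P_k$. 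The closing claim that ``every output fact is triggered by a finite fragment of the positive input'' is exactly what fails: the order facts between installed minima are triggered by finite \emph{sequences}, and different sequences with the same range trigger incompatible facts. Replacing ``order of appearance'' by a canonical ordering of tuples does not obviously repair this, because ``$\bar x$ is the least unrefuted candidate'' is not a monotone condition on the input set.

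The paper's proof makes the same detection idea static, and this is precisely the missing ingredient. There the domain of the $k$-th block consists of all $<_{\mathbb{N}}$-increasing tuples $(x_0,\dots,x_n)$ of input elements whose verified class sizes are $\geq k+1$ except possibly the last (which need only be $\geq k$); membership of such a tuple is a positive existential condition on $\Dp(\A)$, hence monotone and compact, and the ordering between two tuples is the Kleene--Brouwer ordering, determined by the tuples themselves and not by any history. Refutation of a candidate then manifests itself as the tuple acquiring proper extensions, which sit below it in the Kleene--Brouwer order; the resulting linear order has a least element iff some class has size exactly $k$, and a final static replacement of every point by a copy of $1+\eta$ collapses the order type to $1+\eta$ or $\eta$ accordingly. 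To salvage your argument you would need to recast your queue-and-demotion mechanism in such a set-determined form, which essentially amounts to rediscovering this construction.
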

\begin{proof}
  For each number $k\geq 1$, consider the enumeration operator $\Gamma_{e_k}$ (which takes an equivalence structure as an input), where the domain of the output structure is the set of non-empty tuples
  \[D_k = \bigg\{ (x_0,\dots,x_n) : \bigwedge_{i<n}x_i <_{\mathbb{N}} x_{i+1}\ \&\ |[x_i]_\sim| \geq k+1\ \&\ |[x_n]_\sim| \geq k \bigg\},\]
  where the ordering between the tuples is given by $\ov{x} \prec \ov{y}$ if and
  only if $\ov{x}$ is a proper extension of $\ov{y}$, or
    $x_i <_{\mathbb{N}} y_i$ for some index $i < \min\{|{\ov{x}}|,|\ov{y}|\}$.

  If the input structure $\A$ has type $[\omega:\omega,\ k:\omega]$, then
  $\Gamma_{e_k}(\A)$ is a linear ordering with a least element and no greatest
  element, and, if the input structure $\A$ has type $[\omega:\omega,\ m:\omega]$ for $m \neq k$, then
  $\Gamma_{e_k}(\A)$ is a linear ordering with no least element and no greatest element.

  Now, let $\Psi_{a_k}$ be such that $\Psi_{a_k}(\A)$ enumerates a copy of $1+\eta$ in
  place of the elements enumerated by $\Gamma_{e_k}(\A)$. 
  At last, let $\Theta(\A)$ enumerate the disjoint union of the structures
  $\Gamma_{e_k}(\A)$ with $P_k$ distinguishing the substructure enumerated by
  $\Gamma_{e_k}(\A)$. It is now routine to check that $\tilde \E\leq_{pc}
  \mathfrak{K}_{\text{st}}$ via $\Theta$.
  \qed
\end{proof}

\begin{lemma}
  $\mathfrak{K}_{\text{st}} \leq_{pc} \E$.
\end{lemma}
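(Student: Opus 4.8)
The plan is to build a computable enumeration operator $\Gamma_e$ sending $\A_i$ to the equivalence structure $[\omega:1,(i+1):1]\in\E$. Since $i\mapsto i+1$ is injective, and the $\A_i$ (being distinguished by the named predicates $P_j$, with $1+\eta\not\cong\eta$) as well as the structures $[\omega:1,(i+1):1]$ are pairwise non-isomorphic, this one assignment yields both clauses of Definition~\ref{def:scott-embedding}: the image is always in $\E$, and $\A_i\cong\A_{i'}\iff i=i'\iff \Gamma_e(\A_i)\cong\Gamma_e(\A_{i'})$. The entire content is thus to read off, from $\Dp(\A)$, the index of the unique piece $P_i$ carrying a least element (it is $1+\eta$, while every other piece is $\eta$), and to record it as the size of a finite equivalence class.

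The difficulty is that ``$x$ is the least element of $P_j$'' is not a positive fact: an enumeration operator can never confirm that nothing lies below $x$. I would therefore work with \emph{candidates}. First, seed the output with a fixed infinite ``sink'' class $S$ of fresh elements, enumerated once and for all independently of the input. Next, for each $j$ and each $x$ with $P_j(x)$ in the input, reserve $j+1$ fresh output elements forming a class $C_{j,x}$. As soon as $\Dp(\A)$ reveals a witness $y$ with $P_j(y)$ and $y<x$ (all positive facts), declare $x$ \emph{refuted} and enumerate all $\sim$-pairs merging $C_{j,x}$ into $S$. The diagonal $=$, the reflexive pairs $a\sim a$, and all $\neq$ between distinct output elements are enumerated outright. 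Every output fact is triggered by finitely many input facts and nothing is retracted, so $\Gamma_e$ is monotone and compact, hence Scott-continuous by Corollary~\ref{cor:scott-continuous:characterization}; being computable, it is a genuine enumeration operator, witnessing $\leq_{pc}$.

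I would then verify the output on an input $\A\cong\A_i$. Every non-least element of every piece eventually acquires a smaller element of the same piece, so its candidate class is refuted and absorbed into $S$; hence all these classes collapse into the single infinite class $S$. The one element that is never refuted is the least element $x^\ast$ of $P_i$, whose class $C_{i,x^\ast}$ has size exactly $i+1$ and is never merged. Thus $\Gamma_e(\Dp(\A_i))$ has precisely two classes, one of size $\omega$ and one of size $i+1$, i.e.\ it is the positive diagram of $[\omega:1,(i+1):1]\in\E$.

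The point that needs the most care is that the refuted candidates must collapse into \emph{one} infinite class rather than many, since $\E$ admits only a single infinite class; this is exactly why all merges are directed at the common sink $S$. One then checks that there are no stray finite classes — in the limit every candidate except $x^\ast$ is refuted — and that the two surviving classes are genuinely distinct and of the intended sizes $\omega$ and $i+1$.
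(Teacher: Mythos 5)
Your construction is correct and follows essentially the same route as the paper's: both detect the least element of the unique piece of type $1+\eta$ by the fact that no positive facts $P_j(y)$, $y<x$ ever arrive to refute it, give that candidate a finite class whose size encodes the index, and inflate every refuted candidate's class to infinite size, all monotonically. The one refinement is your common sink $S$: the paper's operator leaves each refuted candidate as its own infinite class (so its output, read literally, has infinitely many infinite classes rather than the single one required by the type $[\omega:1,n:1]$), whereas merging all refuted classes into $S$ lands the image exactly in $\E$ as defined.
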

\begin{proof}
  Suppose that the input structure $\A$ has domain $\{x_{k,i} : i,k\in \omega\}$, where $P^\A_k = \{x_{k,i} : i \in \omega\}$.
  We describe how the enumeration operator $\Gamma_e$ works.
  The output structures of $\Gamma_e$ will always have domain a subset of $\{y^j_{k,i} : k,i,j \in \omega\}$.
  
  For any $k$, on input the finite diagram $D_k$ describing a finite chain (inside $P^\A_k$)
  $x_{k,i_0} <_{\A} x_{k,i_1} <_{\A} \cdots <_{\A} x_{k,i_n}$,
  $\Gamma_e(D_k)$ is an infinite part of the output equivalence structure describing the following:
  \[\bigwedge_{j<k} (y^j_{k,i_0} \sim y^{j+1}_{k,i_0})\ \&\ \bigwedge^n_{\ell=1}\bigwedge_{j\in\omega} (y^{j}_{k,i_\ell} \sim y^{j+1}_{k,i_\ell}).\]
  In other words, we associate with the current least element in the $k$-th linear ordering $P^\A_k$ an equivalence class of size $k+1$, 
  and with any other element in the $k$-th linear ordering we associate an infinite equivalence class.

  Let $\A_k$ be the restriction of $\A$ to $P^\A_k$.
  If $\A_k \cong 1+\eta$, then $\Gamma_e(\A)$ will contain an equivalence class of size $k+1$, and all other equivalence classes will be infinite.
  \qed
\end{proof}

\begin{lemma}
  $\E \leq_{pc} \tilde\E$.
\end{lemma}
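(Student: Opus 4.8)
The plan is to build an explicit enumeration operator $\Gamma_e$ witnessing $\E \leq_{pc} \tilde\E$. Recall that the members of $\E$ are the structures $[\omega:1,n:1]$, pairwise non-isomorphic and distinguished precisely by the size $n$ of their unique finite class, while the members of $\tilde\E$ are the structures $[\omega:\omega,n:\omega]$, again distinguished precisely by the common size $n$ of their finite classes. Thus it suffices to design $\Gamma_e$ so that, on an input of type $[\omega:1,n:1]$, the output is an equivalence structure all of whose finite classes have size exactly $n$, present with full multiplicity $\omega$, together with $\omega$-many infinite classes. The guiding idea is to let the sizes of the output classes simply copy the sizes of the input classes, and to take $\omega$-many copies of this construction in parallel.

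Concretely, I would code the domain of the output by triples $\pair{a,m,b}$ with $a,m,b\in\Nat$, and let $\Gamma_e$ act on the positive diagram as follows: whenever both $a\sim b$ and $a\sim b'$ have been enumerated into the input, put $\pair{a,m,b}\sim\pair{a,m,b'}$ into the output for every $m\in\Nat$ (the associated $=$ and $\neq$ facts between distinct output codes being added routinely). In the limit this produces, for each input element $a$ and each $m$, an output equivalence class
\[K_{a,m}=\{\pair{a,m,b} : b\sim a\},\]
whose size equals $|[a]_\sim|$, the size of the $\sim$-class of $a$ in the input. Each output fact is triggered by a finite piece of the input, so $\Gamma_e$ is monotone and compact, hence Scott-continuous and an enumeration operator by Corollary~\ref{cor:scott-continuous:characterization}.

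It then remains to compute the isomorphism type of $\Gamma_e(\A)$ for $\A$ of type $[\omega:1,n:1]$. Writing $C$ for the infinite class and $F$ for the finite class of $\A$ (so $|F|=n$), the classes $K_{a,m}$ with $a\in C$ are infinite, and there are $|C|\cdot\omega=\omega$ of them; the classes $K_{a,m}$ with $a\in F$ have size exactly $n$, and there are $|F|\cdot\omega=n\cdot\omega=\omega$ of them. Hence $\Gamma_e(\A)$ has type $[\omega:\omega,n:\omega]\in\tilde\E$, and since this type depends only on $n$, we get $\A\cong\B$ if and only if $\Gamma_e(\A)\cong\Gamma_e(\B)$, as required for a positive computable embedding.

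The only genuine point to get right is the multiplicity index $m$: without it the finite class $F$ would contribute only $n$ finite output classes, yielding $[\omega:\omega,n:n]\notin\tilde\E$, and the parallel $\omega$-many copies are exactly what promote this to $\omega$-many finite classes. More conceptually, the reason the positive diagram suffices here, despite our inability to observe that a class is ``complete'', is that class size is a positively observable quantity: a class of size $k$ is witnessed by $k$ pairwise $\sim$-equivalent elements, so copying sizes into the output requires no negative information.
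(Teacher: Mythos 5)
Your construction is correct and is essentially the paper's own argument, which simply says that $\Gamma$ ``copies the input structure infinitely many times'': your multiplicity index $m$ realizes exactly those $\omega$-many parallel copies, and your accounting of class sizes and multiplicities is right. The only (harmless) difference is that you spawn one output class per pair (input \emph{element}, copy index) rather than per (input \emph{class}, copy index), which still yields type $[\omega:\omega,n:\omega]$ since $n\cdot\omega=\omega$.
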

\begin{proof}
  We define $\Gamma \colon \E \leq_{pc} \tilde\E$ is a straightforward manner: 
  $\Gamma$ essentially copies the input structure infinitely many times.
  \qed
\end{proof}

By combining the previous three lemmas and Corollary~\ref{cor:tEcomplete} we
obtain: 

\begin{proposition}\label{prop:Complete}
  The classes $\E$, $\tilde\E$, and $\mathfrak{K}_{\text{st}}$ are $\TxtEx$-complete.
\end{proposition}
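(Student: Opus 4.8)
The plan is to derive the proposition purely formally from the three preceding lemmas together with the already-established $\TxtEx$-completeness of $\tilde\E$ (Corollary~\ref{cor:tEcomplete}), using two elementary facts: every positive computable embedding $\leq_{pc}$ is in particular a Scott-continuous embedding $\embedScott$, and $\embedScott$ is transitive.

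First I would record these two structural observations. Since an enumeration operator $\Gamma_e$ is a generalized enumeration operator, Proposition~\ref{prop:scott:enumeration} guarantees that it is Scott-continuous; and because the embedding conditions in the definition of $\leq_{pc}$ coincide with clauses (1)--(2) of Definition~\ref{def:scott-embedding}, any witness to $\mathfrak{K}_0 \leq_{pc} \mathfrak{K}_1$ also witnesses $\mathfrak{K}_0 \embedScott \mathfrak{K}_1$. Transitivity of $\embedScott$ holds because the composition of Scott-continuous maps is Scott-continuous, and the two clauses of Definition~\ref{def:scott-embedding} (that the image of a positive atomic diagram is again the positive atomic diagram of a structure in the target class, and that isomorphism is preserved in both directions) pass through a composition.

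The three lemmas then give the cycle $\tilde\E \leq_{pc} \mathfrak{K}_{\text{st}} \leq_{pc} \E \leq_{pc} \tilde\E$. Reading it as a chain of $\embedScott$ relations and closing under transitivity, all three classes become mutually $\embedScott$-embeddable. For learnability I would invoke Theorem~\ref{th:txtex-learning-equivalence}: from $\E \embedScott \tilde\E$ and from $\mathfrak{K}_{\text{st}} \embedScott \E \embedScott \tilde\E$, the implication (2)$\Rightarrow$(1) yields that both $\E$ and $\mathfrak{K}_{\text{st}}$ are $\TxtEx$-learnable, while $\tilde\E$ is learnable already by Corollary~\ref{cor:tEcomplete}.

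For universality I would use the universality half of the completeness of $\tilde\E$: every countable $\TxtEx$-learnable class $\mathfrak{K}$ satisfies $\mathfrak{K} \embedScott \tilde\E$. Composing with $\tilde\E \embedScott \mathfrak{K}_{\text{st}} \embedScott \E$ gives $\mathfrak{K} \embedScott \E$, and composing with $\tilde\E \embedScott \mathfrak{K}_{\text{st}}$ gives $\mathfrak{K} \embedScott \mathfrak{K}_{\text{st}}$; together with the learnability established above, this shows that $\E$ and $\mathfrak{K}_{\text{st}}$ are both $\TxtEx$-complete. I do not anticipate a genuine obstacle here; the only point deserving care is verifying that clauses (1)--(2) of Definition~\ref{def:scott-embedding} are genuinely closed under composition, so that the appeal to transitivity of $\embedScott$ is justified.
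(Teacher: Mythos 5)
Your proposal is correct and is essentially the paper's own argument: the paper simply states that the proposition follows ``by combining the previous three lemmas and Corollary~\ref{cor:tEcomplete},'' which is precisely the cycle-plus-transitivity reasoning you spell out. The only difference is that you make explicit the (routine) facts that $\leq_{pc}$ implies $\embedScott$ and that $\embedScott$ composes, which the paper leaves implicit.
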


%

\section{Applications}

Recall that \cite[Theorem 1.4]{pmlr-v98-fokina19a} proves that the class $\mathfrak{K} = \{[\omega:1], [\omega:2]\}$ is $\InfEx$-learnable, but not $\TxtEx$-learnable.
We give a new simple proof of this fact using $\TxtEx$-complete classes.

\begin{proposition}
  The class $\mathfrak{K} = \{[\omega:1], [\omega:2]\}$ is $\InfEx$-learnable, but not $\TxtEx$-learnable.
\end{proposition}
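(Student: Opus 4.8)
The plan is to handle the two halves separately, with the $\InfEx$ half being essentially immediate and the $\TxtEx$ half being the substance. For $\InfEx$-learnability I would simply exhibit a learner: on an informant it conjectures the index of $[\omega:1]$ until a negative fact $\neg(a\sim b)$ for distinct $a,b$ appears, and then switches permanently to the index of $[\omega:2]$. If the structure is $[\omega:1]$, all elements are $\sim$-related, so no such negative fact is ever enumerated and the learner stabilizes correctly; if it is $[\omega:2]$, then $\D(\mathcal S)$ contains some $\neg(a\sim b)$ with $a,b$ in distinct classes, which the informant must eventually list, so the learner converges to the right index. (Equivalently one may invoke the $\InfEx$-characterization $(\dagger)$ with the $\Sigma^{\mathrm{inf}}_2$ sentences $\forall x\forall y\,(x\sim y)$ isolating $[\omega:1]$ and $\exists x\exists y\,\neg(x\sim y)$ isolating $[\omega:2]$.)

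For the failure of $\TxtEx$-learnability, the key observation is that the positive diagrams are nested. If $\B\cong[\omega:2]$ is any copy on domain $\omega$ and $\A^\ast\cong[\omega:1]$ is the copy on $\omega$ in which all elements are $\sim$-related, then $\Dp(\B)\subseteq\Dp(\A^\ast)$: the within-class $\sim$-facts of $\B$ are among the all-pairs $\sim$-facts of $\A^\ast$, while the $=$ and $\neq$ facts coincide (the domains are equal). Assuming a learner $M$ for $\mathfrak K$, I would apply Proposition~\ref{pr:txt-ex-tell-tale} to $\A^\ast$, starting from the empty sequence, to obtain a finite $\sigma'$ on $\Dp(\A^\ast)$ on which $M$ has locked onto the (necessarily correct) index of $[\omega:1]$ and never changes along any extension on $\Dp(\A^\ast)$. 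Since $\sigma'$ mentions only finitely many elements, all asserted pairwise $\sim$ and $\neq$, I can choose $\B\cong[\omega:2]$ so that these elements lie inside a single class of $\B$; then $\sigma'$ is also on $\Dp(\B)$, and by the nesting every initial segment extending $\sigma'$ of any text for $\B$ is again on $\Dp(\A^\ast)$.

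Feeding such a text for $\B$ to $M$ then forces the output to equal the index of $[\omega:1]$ on every segment past $\sigma'$, so $M$ converges to the wrong index on a text for $[\omega:2]$, a contradiction. The main obstacle, and the phenomenon genuinely special to text, is exactly the nesting $\Dp([\omega:2])\subseteq\Dp([\omega:1])$: it expresses that a second equivalence class can never be positively confirmed, only silently omitted, so no learner reading a text can ever safely commit to $[\omega:1]$. I would also record the alternative route through Theorem~\ref{th:txtex-learning-equivalence}(3), where the obstacle reappears syntactically: one must show that no $\Sigma^p_2$ sentence isolates $[\omega:1]$ within $\mathfrak K$. The delicate point there is that a $\Sigma^p_2$ sentence true in $[\omega:1]$ persists to $[\omega:2]$, because where every $\sim$-atom holds its $\Pi^p_1$ block can only be satisfied through equality-atoms, and the relevant equality types are preserved once the witnesses of its $\Sigma^p_1$ block are placed within one class of $[\omega:2]$; this is precisely the step that would require care, whereas the tell-tale argument sidesteps all formula-level analysis.
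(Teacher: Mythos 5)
Your proof is correct, but it takes a genuinely different route from the paper's. The paper deliberately gives a ``new'' proof that routes through its embedding machinery: assuming $\TxtEx$-learnability, it extracts from the proof of Lemma~\ref{lem:txtex-embeds-tilde-eq} a Scott-continuous embedding $\Gamma$ of $\{[\omega:1],[\omega:2]\}$ into $\{[\omega:1,1:1],[\omega:1,2:1]\}$, locates the singleton class element $b_0\in\Gamma(\A)$ inside $\Gamma(\alpha)$ for a finite part $\alpha$ of $\A\cong[\omega:1]$ by compactness, and then uses monotonicity on a positive substructure $\A'\cong[\omega:2]$ containing $\alpha$ to force $b_0$ to acquire a $\sim$-partner, a contradiction. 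You instead run the classical Gold-style locking-sequence argument directly: you exploit the nesting $\Dp(\B)\subseteq\Dp(\A^\ast)$ for a suitably chosen copy $\B\cong[\omega:2]$, obtain a locking sequence $\sigma'$ for $\A^\ast$ from Proposition~\ref{pr:txt-ex-tell-tale}, arrange the finitely many elements of $\sigma'$ inside one class of $\B$, and conclude that $M$ converges to the wrong index on a text for $\B$. Both arguments are sound and both ultimately rest on the same phenomenon (positive information cannot certify the presence of a second class); yours is more elementary and self-contained, needing only the tell-tale proposition, while the paper's version illustrates how monotonicity and compactness of Scott-continuous embeddings can replace the explicit locking-sequence bookkeeping --- at the cost of leaning on the somewhat informal claim that the Lemma~\ref{lem:txtex-embeds-tilde-eq} construction restricts to the two-element class $\mathfrak{K}_0$. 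Your closing remark about the syntactic route via condition (3) of Theorem~\ref{th:txtex-learning-equivalence} is a reasonable aside, and you correctly flag that it is the delicate option and do not rely on it.
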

\begin{proof}
  Towards a contradiction, assume that $\mathfrak{K}$ is $\TxtEx$-learnable.
  A simple analysis of the proof of Lemma~\ref{lem:txtex-embeds-tilde-eq} shows that for the class $\mathfrak{K}_0 = \{[\omega:1,1:1], [\omega:1,2:1]\}$,
  we must have $\mathfrak{K} \embedScott \mathfrak{K}_0$ via some Scott-continuous operator $\Gamma$.
  Without loss of generality, suppose that for any structure $\A$ of type $[\omega:1]$,
  $\Gamma(\A)$ is an equivalence structure of type $[\omega:1,1:1]$.
  
  Let $b_0$ be the element in $\Gamma(\A)$ such that $|[b_0]_\sim| = 1$.
  By compactness, there is some finite part $\alpha$ of $\A$ for which $b_0 \in \Gamma(\alpha)$.
  Now, partition $\A$ into two infinite classes of infinite size such that $\alpha$ is contained entirely in one of the classes. In this way we produce a structure $\A'$ of type $[\omega:2]$ which is a substructure (w.r.t.\ positive atomic facts) of $\A$. Since Scott-continuity implies monotonicity, $\Gamma(\A') \subseteq \Gamma(\A)$.
  Since $b_0 \in \Gamma(\alpha)$ and $\alpha$ is a finite part of $\A'$,
  $b_0 \in \Gamma(\A')$. But since $\Gamma(\A')$ has type $[\omega:1,2:1]$, it follows that
  there is at least one element $c_0$ such that $\Gamma(\A') \models b_0 \sim c_0$.
  Since $\Gamma(\A') \subseteq \Gamma(\A)$, it follows that $\Gamma(\A) \models b_0 \sim c_0$.
  We reach a contradiction with the fact that $|[b_0]_\sim| = 1$ in $\Gamma(\A)$. \qed
\end{proof}

It is natural to search for $\TxtEx$-learnable classes which are not $\TxtEx$-complete.
Consider the class $\mathfrak{K} = \{\A_i : i\geq 1\}$, where $\A_i$ is an equivalence structure of type $[i:\omega]$.
It is clear that $\mathfrak{K}$ is $\TxtEx$-learnable, since for each structure $\A_i$ we have a distinguishing $\Sigma^p_2$ sentence $\psi_i := \varphi_{i-1}$ taken from Eq.~(\ref{equ:varphi_i}).


To see that $\mathfrak{K}$ is not $\TxtEx$-complete, it is enough to consider the following.

\begin{proposition}\label{prop:not-txtex-complete}
  $\{[\omega:1,1:1],[\omega:1,2:1]\} \not\embedScott \{[1:\omega],[2:\omega]\}$.
\end{proposition}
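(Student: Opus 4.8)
The plan is to assume a Scott-continuous embedding $\Gamma\colon \{[\omega:1,1:1],[\omega:1,2:1]\} \embedScott \{[1:\omega],[2:\omega]\}$ exists and derive a contradiction purely from monotonicity (part (a) of Corollary~\ref{cor:scott-continuous:characterization}). First I would isolate the only feature of the target class that matters: a copy of $[2:\omega]$ contains positive facts of the form $a \sim b$ with $a \neq b$, whereas a copy of $[1:\omega]$ contains no such fact, its only $\sim$-facts being trivial instances $a \sim a$. Since $\Gamma$ reflects and preserves isomorphism (clause (2) of Definition~\ref{def:scott-embedding}), the two non-isomorphic source types are sent to two distinct target types; as the target has exactly two isomorphism types, one source type is mapped into $[1:\omega]$ and the other into $[2:\omega]$.

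The heart of the argument is the observation that, at the level of positive diagrams, each source type has a copy sitting \emph{inside} a copy of the other. Writing $\A_1 = [\omega:1,1:1]$ and $\A_2 = [\omega:1,2:1]$: starting from a copy of $\A_1$ with infinite class $C$ and extra singleton $\{s\}$, I would split off a pair $\{p,q\}\subseteq C$ and discard $s$, obtaining a copy of $\A_2$ whose positive diagram is contained in that of the copy of $\A_1$ (every $\sim$- and $\neq$-fact of the smaller structure already holds in the larger, because $C$ is a single class there). Conversely, from a copy of $\A_2$ with infinite class $C$ and pair $\{p,q\}$, I would demote $p$ to a singleton and discard $q$, obtaining a copy of $\A_1$ whose positive diagram is contained in that of the copy of $\A_2$. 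The conceptual point is that refining a partition only \emph{removes} positive $\sim$-facts, so \emph{both} inclusions are available simultaneously.

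Combining the two ingredients finishes the proof. By the previous paragraph, for whichever index $k$ has $\Gamma(\A_k)\cong[2:\omega]$, I may pick a copy $\mathcal{C}$ of $\A_k$ and a copy $\mathcal{D}$ of $\A_{3-k}$ with $\Dp(\mathcal{C})\subseteq\Dp(\mathcal{D})$. Monotonicity then gives $\Gamma(\mathcal{C})\subseteq\Gamma(\mathcal{D})$; but $\Gamma(\mathcal{C})\cong[2:\omega]$ contains a positive fact $a\sim b$ with $a\neq b$, while $\Gamma(\mathcal{D})\cong[1:\omega]$ contains none, which is the desired contradiction. I expect the only delicate point to be getting the \emph{direction} of the positive-diagram inclusion to line up with the a priori unknown assignment of target types to source types; this is exactly what is resolved by having both source-type inclusions in hand, so that whichever type $\Gamma$ routes to $[2:\omega]$ can be realized as a positive substructure of a copy of the type routed to $[1:\omega]$.
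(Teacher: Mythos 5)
Your proof is correct and follows essentially the same route as the paper: realize one source type as a substructure (w.r.t.\ positive atomic facts) of a copy of the other, apply monotonicity of $\Gamma$, and observe that the positive diagram of a copy of $[2:\omega]$ (which contains a fact $a \sim b$ with $a \neq b$) cannot be contained in that of a copy of $[1:\omega]$. You are in fact slightly more careful than the paper, whose proof only treats the assignment $\Gamma([\omega:1,2:1]) \cong [2:\omega]$; your point that \emph{both} positive-diagram inclusions between the two source types are available is exactly what is needed to cover whichever assignment $\Gamma$ actually makes.
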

\begin{proof}
  Assume that $\Gamma\colon \{[\omega:1,1:1], [\omega:1,2:1]\} \embedScott \{[1:\omega],[2:\omega]\}$. 
  Let $\B$ be an equivalence structure of type $[\omega:1,1:1]$ and $\A$ be a substructure (w.r.t.\ positive atomic facts) of $\B$ of type $[\omega:1,2:1]$.
  By the monotonicity of Scott-continuous operators, $\Gamma(\A) \subseteq \Gamma(\B)$.
  But $\Gamma(\A)$ is an equivalence structure of type $[2:\omega]$ and $\Gamma(\B)$ is an equivalence structure of type $[1:\omega]$.
  We reach a contradiction by observing that $[2:\omega]$ is not embeddable into $[1:\omega]$.
  \qed 
\end{proof}

Assume that the class $\mathfrak{K}$ is $\TxtEx$-complete, then $\E \embedScott \mathfrak{K}$.
We can easily generalize the argument from Proposition~\ref{prop:not-txtex-complete} to reach a contradiction.

\section{Further Discussion}





Recall that the paper~\cite{BCSM-21} explored some connections between $\InfEx$-learnability and descriptive set theory. Here we elaborate more on this approach.

For $\alpha, \beta \in 2^\omega$, we define $\alpha \mathrel{E_0} \beta$ if and only if $(\exists n)(\forall m\geq n)[\alpha(n) = \beta(n)]$.
In \cite{BCSM-21}, a class of structures $\mathfrak{K}$ is characterized as
$\InfEx$-learnable if and only if the isomorphism relation $\cong
\,\upharpoonright\mathrm{LD}(\mathfrak{K})$ is (Cantor-)continuously reducible
to the relation $E_0$ of eventual agreement on reals (i.e., there is a
Cantor-continuous function $\Phi$ such that for all $\A,\B\in \mathfrak K$
$\A\cong \B$ if and only if $\Phi(\A) \mathrel{E_0} \Phi(\B)$ .


Motivated by this result we formulate the following question.
\begin{question}\label{question:1}
  Characterize $\TxtEx$-learnability in terms of Scott-continuous functions and familiar Borel equivalence relations.
\end{question}

\paragraph{{\bf Acknowledgements.}} {Fokina was supported by the Austrian Science Fund FWF through the project P~36781. Rossegger was supported by the European Union's Horizon 2020 Research and Innovation Programme under the Marie Sk\l{}odowska-Curie grant agreement No. 101026834 — ACOSE. Soskova and Vatev were partially supported by FNI-SU 80-10-180/17.05.2023.}



\printbibliography

\end{document}